\newtheorem{Thm}{Theorem}[section]
\newtheorem{Lemma}[Thm]{Lemma}
\newtheorem{Cor}[Thm]{Corollary}
\theoremstyle{definition}
\newtheorem{Rmk}[Thm]{Remark}
\def\bfe{\mathbf{e}}
\def\bfk{\mathbf{k}}
\def\bfx{\mathbf{x}}
\def\bfm{\mathbf{m}}
\def\bfp{\mathbf{p}}
\def\bbz{\mathbb{Z}}
\def\bbr{\mathbb{R}}
\def\bbz{\mathbb{Z}}
\def\bbn{\mathbb{N}}
\def\bbq{\mathbb{Q}}
\def\lra{\longrightarrow}
\def\x{\times}
\def\bs{\backslash}
\def\aut{\mathrm{Aut}}
\def\aff{\mathrm{Aff}}
\def\fix{\mathrm{Fix}}
\def\gfix{\mathrm{fix}}
\def\Nil{\mathrm{Nil}}
\def\R{(\mathrm{R})}
\def\Ad{\mathrm{Ad}}
\def\ad{\mathrm{ad}}
\def\GL{\mathrm{GL}}
\def\frakS{\mathfrak{S}}
\def\Sol{\mathrm{Sol}}
\def\GammaA{\Gamma_{\!A}}
\def\bfa{\mathbf{a}}
\def\calZ{\mathcal{Z}}
\def\nil{\mathrm{nil}}
\def\E{\mathrm{(E)}}
\def\R{\mathrm{(R)}}
\def\frakG{\mathfrak{G}}
\def\HPer{\mathrm{HPer}}
\def\DH{\mathrm{DH}}
\def\bbA{\mathbb{A}}
\def\bbD{\mathbb{D}}
\def\bfu{\mathbf{u}}
\def\boxit#1{\vbox{\hrule\hbox{\vrule\kern3pt
     \vbox{\kern3pt#1\kern3pt}\kern3pt\vrule}\hrule}}
\begin{document}
\title[Homotopy minimal periods of maps on infra-solvmanifolds]
{Density of the homotopy minimal periods of maps on infra-solvmanifolds}

\author{Jong Bum Lee}
\address{Department of Mathematics, Sogang University, Seoul 121-742, KOREA}
\email{jlee@sogang.ac.kr}

\author{Xuezhi Zhao}
\address{Department of Mathematics \& Institute of Mathematics and Interdisciplinary Science, Capital Normal University, Beijing 100048, CHINA}
\email{zhaoxve@mail.cnu.edu.cn}

\subjclass[2010]{37C25, 55M20}%
\keywords{Holonomy, homotopy minimal period, infra-solvmanifold, periodic point}

\abstract
We study the homotopical minimal periods for maps on infra-solvmanifolds of type $\R$ using the density of the homotopical minimal period set in the natural numbers. This extends the result of \cite{Zhao et al} from flat manifolds to infra-solvmanifolds of type $\R$. Applying our main result we will list all possible maps on infra-solvmanifolds up to dimension three for which the corresponding density is positive.
\endabstract
\date{\today}

\maketitle

\tableofcontents

\section{Introduction}

Let $f:X\to X$ be a self-map on a topological space $X$. We define the following:
The set of \emph{periodic points} of $f$ with \emph{minimal period} $n$
\begin{align*}
P_n(f)&=\fix(f^n)-\bigcup_{k<n}\fix(f^k)
\end{align*}
and the set of \emph{homotopy minimal periods} of $f$
\begin{align*}
\HPer(f)&=\bigcap_{g \simeq f}\left\{n\in\bbn\mid P_n(g)\ne\emptyset\right\}.
\end{align*}
The famous \v{S}arkovs'ki\v{\i} theorem characterizes the dynamics (minimal periods) of a map of interval \cite{Sar}. The set of minimal periods of maps on the circle has been completely described in \cite{BGMY}. This led to a problem of study the set $\HPer(f)$ of homotopy minimal periods of $f$. Such an invariant gives an information about rigid dynamics of $f$. A fundamental question is to determine if the set $\HPer(f)$ is empty, finite or infinite.  This problem was successfully studied in \cite{JL} when the space $X$ is a torus of any dimension, and this was extended in \cite{JM} (see also \cite{JM-pacific, LZ-china}) to any nilmanifold, and in \cite{JKeM,LZ-AGT} to the special solvmanifolds modeled on $\Sol$.
When $X$ is the Klein bottle, the same problem was studied in \cite{KKZ}, and when $X$ is an infra-nilmanifold and $f$ is an expanding map, it was shown in \cite{Tau, LL-JGP, LZ-japan} that $\HPer(f)$ is co-finite.

It is now natural to seek for more information when $\HPer(f)$ becomes infinite. When $X$ is a flat manifold, some sufficient conditions on $X$ and $f$ for $\HPer(f)$ to be infinite were found in \cite{LZ, Zhao et al}. In particular, for this purpose, we will consider the following invariant, \cite[Definition~1.1]{Zhao et al} and \cite[Remark~3.1.60]{JM-book}: The \emph{lower density} of the homotopy minimal periods of $f$ is defined to be
$$
\DH(f)=\liminf_{n\to\infty}\frac{\#(\HPer(f)\cap[0,n])}{n}.
$$
From this definition, $\DH(f)\in[0,1]$. If $\HPer(f)$ is either empty or finite, then $\DH(f)=0$. So, we are interested in the case when $\HPer(f)$ is infinite. If one picks randomly a natural number, $\DH(f)$ is a lower bound for the probability of choosing number in $\HPer(f)$. Thus, the number $\DH(f)$ will bring to us more information about the periods of given map $f$ when $\HPer(f)$ is infinite.

The purpose of this paper is to generalize \cite{Zhao et al} from flat manifolds to infra-solvmanifolds of type $\R$ and thereby to list all possible maps on infra-solvmanifolds up to dimension three for which the corresponding density is positive.

\section{Infra-solvmanifolds}

Let $S$ be a connected and simply connected solvable Lie group. A discrete subgroup $\Gamma$ of $S$ is a {\it lattice} of $S$ if $\Gamma\bs{S}$ is compact, and in this case, we say that the quotient space $\Gamma\bs{S}$ is a {\it special} solvmanifold. Let $\Pi\subset\aff(S)$ be a torsion-free finite extension of the lattice $\Gamma=\Pi\cap S$ of $S$. That is, $\Pi$ fits the short exact sequence
$$
\CD
1@>>>S@>>>\aff(S)@>>>\aut(S)@>>>1\\
@.@AAA@AAA@AAA\\
1@>>>\Gamma@>>>\Pi@>>>\Pi/\Gamma@>>>1
\endCD
$$
Then $\Pi$ acts freely on $S$ and the manifold $\Pi\bs{S}$ is called an {\it infra-solvmanifold}. The finite group $\Phi=\Pi/\Gamma$ is the \emph{holonomy group} of $\Pi$ or $\Pi\bs{S}$. It sits naturally in $\aut(S)$. Thus every infra-solvmanifold $\Pi\bs{S}$ is finitely covered by the special solvmanifold $\Gamma\bs{S}$. An infra-solvmanifold $\Pi\bs{S}$ is of type $\R$ if $S$ is of \emph{type $\R$} or \emph{completely solvable}, i.e., if $\ad\, X:\frakS\to\frakS$ has only real eigenvalues for all $X$ in the Lie algebra $\frakS$ of $S$. It is known that if $S$ is of type $\R$, then it is of type $\E$, i.e., $\exp:\frakS\to S$ is surjective.

For any endomorphism $D$ on $S$, we denote the differential of $D:S\to S$ by $D_*:\frakS\to\frakS$.

Recall \cite[Theorem~1]{Wilking} in which it is proved that every infra-solvmanifold is modeled in a canonical way on a supersolvable Lie group. According to \cite[Lemma~2.3]{HL13-b} or \cite[Lemma~4.1]{Wilking}, a supersolvable Lie group is of type $\R$. Hence whenever we deal with infra-solvmanifolds, we may assume that we are given infra-solvmanifolds $M=\Pi\bs{S}$ of type $\R$.

We first recall the following:

\begin{Lemma}[{\cite[Lemma~2.1]{LL-Nagoya}}]\label{fully inv}
Let $S$ and $S'$ be simply connected solvable Lie groups, and let $\Pi\subset\aff(S)$ and $\Pi'\subset\aff(S')$ be finite extensions of lattices $\Gamma=\Pi\cap S$ of $S$ and $\Gamma'=\Pi'\cap S'$ of $S'$, respectively. Then there exist fully invariant subgroups $\Lambda\subset\Gamma$ and $\Lambda'\subset\Gamma'$ of $\Pi$ and $\Pi'$ respectively, which are of finite index, so that any homomorphism $\theta:\Pi\to\Pi'$ restricts to a homomorphism $\Lambda \to\Lambda'$.
\end{Lemma}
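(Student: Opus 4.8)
The strategy is purely group-theoretic and rests on two elementary facts. First, both $\Pi$ and $\Pi'$ are finitely generated: a lattice in a simply connected solvable Lie group is polycyclic (Mostow), hence finitely generated, and a group containing a finitely generated subgroup of finite index is itself finitely generated; consequently, for every integer $k$ there are only finitely many subgroups of index $\le k$ in $\Pi$ (resp.\ $\Pi'$), so the intersection of all of them is again a subgroup of finite index. Second, for any group homomorphism $\psi\colon G\to G'$ and any finite-index subgroup $H'\le G'$, the preimage $\psi^{-1}(H')$ has index in $G$ at most $[G':H']$, since $G/\psi^{-1}(H')$ injects into $G'/H'$.

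Using these, I would first construct $\Lambda'$ by setting
$$
\Lambda'=\bigcap\bigl\{H'\le\Pi' : [\Pi':H']\le[\Pi':\Gamma']\bigr\}.
$$
By the first fact this is an intersection of finitely many finite-index subgroups, hence has finite index; it is contained in $\Gamma'$ because $\Gamma'$ itself occurs among the $H'$; and it is fully invariant in $\Pi'$, since for any endomorphism $\phi$ of $\Pi'$ the second fact shows that $\phi^{-1}(H')$ is again one of the subgroups being intersected, whence $\phi(\Lambda')\subseteq H'$ for each such $H'$ and therefore $\phi(\Lambda')\subseteq\Lambda'$. Put $N'=[\Pi':\Lambda']$, which is finite.

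Next I would set
$$
\Lambda=\bigcap\bigl\{H\le\Pi : [\Pi:H]\le\max\{[\Pi:\Gamma],\,N'\}\bigr\}.
$$
The same three observations give that $\Lambda$ has finite index in $\Pi$, is contained in $\Gamma$, and is fully invariant in $\Pi$. For the transfer property, let $\theta\colon\Pi\to\Pi'$ be an arbitrary homomorphism; by the second fact, $[\Pi:\theta^{-1}(\Lambda')]\le N'\le\max\{[\Pi:\Gamma],\,N'\}$, so $\theta^{-1}(\Lambda')$ is one of the subgroups defining $\Lambda$, and hence $\Lambda\subseteq\theta^{-1}(\Lambda')$, i.e.\ $\theta(\Lambda)\subseteq\Lambda'$, so $\theta$ restricts to a homomorphism $\Lambda\to\Lambda'$.

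The only genuinely delicate point is that there may be infinitely many homomorphisms $\Pi\to\Pi'$, so one cannot naively intersect the preimages $\theta^{-1}(\Lambda')$ over all of them; this is neutralized by the \emph{uniform} index bound $[\Pi:\theta^{-1}(\Lambda')]\le N'$, which does not depend on $\theta$. Everything else is bookkeeping, the only structural input being the finite generation of $\Pi$ and $\Pi'$.
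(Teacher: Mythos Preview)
The paper does not supply a proof of this lemma; it is quoted from \cite[Lemma~2.1]{LL-Nagoya} and used as a black box. Your argument is correct and complete. The two structural inputs you isolate---that $\Pi$ and $\Pi'$ are finitely generated (via polycyclicity of lattices in simply connected solvable Lie groups) and the uniform bound $[\Pi:\theta^{-1}(\Lambda')]\le[\Pi':\Lambda']$ independent of $\theta$---are exactly what is needed, and the construction of $\Lambda'$ and then $\Lambda$ as intersections of all subgroups of bounded index is the standard and cleanest way to manufacture fully invariant finite-index subgroups in this situation. I would expect the original proof in \cite{LL-Nagoya} to proceed along essentially the same lines; your closing remark pinpointing why one cannot simply intersect the $\theta^{-1}(\Lambda')$ over all $\theta$, and how the uniform index bound rescues this, shows you understand where the actual content lies.
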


When the infra-solvmanifolds are of type $\R$, we have the following second Bieberbach type result.

\begin{Thm}[{\cite{HL13-b}}]\label{homotopy lift}
\begin{enumerate}
\item[$(1)$] Any continuous map $f:\Pi\bs{S}\to\Pi'\bs{S}'$ between infra-solvmanifolds of type $\R$ has an affine map $(d,D):S\to S'$ as a homotopy lift.
\item[$(2)$] Any continuous map $f:\Gamma\bs{S}\to\Gamma'\bs{S}'$ between special solvmanifolds of type $\R$ has a Lie group homomorphism $D:S\to S'$ as a homotopy lift.
\end{enumerate}
When $f$ is a homeomorphism, $D$ can be chosen to be invertible.
\end{Thm}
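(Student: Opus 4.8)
The plan is to reduce the statement to a rigidity theorem for lattices in solvable Lie groups and then recover the affine map by a Bieberbach-type averaging over the holonomy group. The starting point is that infra-solvmanifolds are aspherical: $S$ and $S'$ are diffeomorphic to Euclidean spaces and $\Pi$, $\Pi'$ act on them freely and properly discontinuously, so $\Pi\bs S$ and $\Pi'\bs S'$ are $K(\pi,1)$-spaces. Hence a continuous map $f\colon\Pi\bs S\to\Pi'\bs S'$ is determined up to homotopy by the homomorphism $\theta=f_*\colon\Pi\to\Pi'$ (well defined up to conjugacy in $\Pi'$), and conversely every homomorphism $\Pi\to\Pi'$ arises this way. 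So it suffices to show that an arbitrary $\theta\colon\Pi\to\Pi'$ admits a $\theta$-equivariant affine map $(d,D)\colon S\to S'$, meaning $(d,D)\circ\gamma=\theta(\gamma)\circ(d,D)$ as maps $S\to S'$ for all $\gamma\in\Pi$; such a map descends to $\bar f\colon\Pi\bs S\to\Pi'\bs S'$ with $\bar f_*=\theta$, whence $\bar f\simeq f$. Part $(2)$ is the special case where the holonomy groups $\Pi/\Gamma$, $\Pi'/\Gamma'$ are trivial, in which $D$ itself descends.

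First I would apply Lemma~\ref{fully inv} to obtain fully invariant, finite-index subgroups $\Lambda\subset\Gamma$ and $\Lambda'\subset\Gamma'$ of $\Pi$ and $\Pi'$ such that $\theta$ restricts to $\theta|_\Lambda\colon\Lambda\to\Lambda'$. Now $\Lambda$ and $\Lambda'$ are lattices in the simply connected completely solvable Lie groups $S$ and $S'$, and the crucial input is the rigidity statement: \emph{every homomorphism between lattices of simply connected completely solvable Lie groups extends uniquely to a Lie group homomorphism of the ambient groups}. This is the type~$\R$ analogue of Mal'cev's rigidity theorem for simply connected nilpotent Lie groups; one establishes it either through the structure theory of completely solvable groups (refining $S$ by a supersolvable filtration, using Mal'cev rigidity on the nilradical, and inducting on the length of the filtration) or through the classical results of Mostow and Saito on the algebraic nature of such lattices. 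Applying this to $\theta|_\Lambda$ produces a Lie group homomorphism $D\colon S\to S'$ extending $\theta|_\Lambda$, and uniqueness of the extension is what makes the rest of the construction canonical. I expect this rigidity step --- together with the bookkeeping inside $\aff(S)=S\rtimes\aut(S)$ carried out next --- to be the main obstacle.

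It remains to promote $D$ to a $\theta$-equivariant affine map. Since $\Lambda$ is fully invariant, conjugation by any $\gamma=(a,A)\in S\rtimes\aut(S)$ in $\Pi$ preserves $\Lambda$ and is induced by the Lie group automorphism $\psi_\gamma=\inn(a)\circ A$ of $S$, and likewise conjugation by $\theta(\gamma)$ is induced by an automorphism $\psi'_{\theta(\gamma)}$ of $S'$. Both $D\circ\psi_\gamma$ and $\psi'_{\theta(\gamma)}\circ D$ are Lie group homomorphisms that restrict on $\Lambda$ to $\theta$ composed with conjugation by $\gamma$, so uniqueness of extensions forces $D\circ\psi_\gamma=\psi'_{\theta(\gamma)}\circ D$ for all $\gamma\in\Pi$; that is, $D$ intertwines the linear holonomy actions. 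A direct computation in $\aff(S')$ then shows that the affine map $(e,D)$ deviates from $\theta$-equivariance by a correction term depending only on the coset of $\gamma$ in the finite holonomy group $\Phi=\Pi/\Lambda$, and the obstruction to removing it by a change of translation part lies in a cohomology group of the finite group $\Phi$ with coefficients in a real vector space built from $\frakS'$ and the linear holonomy, which therefore vanishes. Concretely, one obtains the translation part $d\in S'$ by averaging this finite family over $\Phi$; this is legitimate because $S'$ is of type~$\R$, hence of type~$\E$, so $\exp\colon\frakS'\to S'$ is a diffeomorphism and the averaging takes place in $\frakS'$. The resulting $(d,D)$ is the sought homotopy lift, which gives $(1)$, and $(2)$ follows as noted. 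Finally, if $f$ is a homeomorphism then $\theta$ is an isomorphism; applying the extension theorem to $\theta$ (suitably restricted to lattices) and to $\theta^{-1}$ and using uniqueness of extensions makes the two resulting Lie group homomorphisms mutually inverse, so $D$, and hence $(d,D)$, can be taken invertible.
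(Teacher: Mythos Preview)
The paper does not contain a proof of this theorem; it is quoted from the cited reference \cite{HL13-b} and used as a black box, so there is no in-paper argument to compare your sketch against.

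That said, your outline follows the expected route for second-Bieberbach-type results and is essentially the strategy used in the literature: reduce to the induced homomorphism on fundamental groups via asphericity, invoke rigidity of lattices in simply connected completely solvable Lie groups (Saito/Mal'cev-type extension and uniqueness) to produce the Lie group homomorphism $D$, use uniqueness to get the intertwining $D\circ\psi_\gamma=\psi'_{\theta(\gamma)}\circ D$, and then solve for the translational part $d$ by an averaging/cohomological argument over the finite holonomy. One point deserving more care is your averaging step: since $S'$ is non-abelian, the discrepancy between $(e,D)$ and a $\theta$-equivariant affine map is not literally a $1$-cocycle of $\Phi$ with values in a single real vector space, and simply averaging in $\frakS'$ via $\exp$ does not immediately yield the required $d$. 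Making this precise typically requires working inductively through the central (or supersolvable) filtration of $S'$, reducing at each stage to an honest abelian cohomology problem for the finite group $\Phi$; this is where the technical weight of the argument in \cite{HL13-b} lies. Your sketch correctly identifies this as the main obstacle but understates the work involved.
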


Let $f:\Pi\bs{S}\to\Pi\bs{S}$ be a self-map on the infra-solvmanifold $\Pi\bs{S}$ of type $\R$ with affine homotopy lift $(d,D):S\to S$. Since $\HPer(f)$ is a homotopy invariant, we may assume that $f$ is induced by the affine map $(d,D)$. The map $f$ induces a homomorphism $\varphi:\Pi\to\Pi$ on the group $\Pi$ of covering transformations of the covering projection $S\to\Pi\bs{S}$, which is given by
\begin{align}\label{homo}
\varphi(\alpha)(d,D)=(d,D)\alpha,\ \ \forall\alpha\in\Pi.\tag{$\ast$}
\end{align}
For any $(a,A)\in\Phi$, let $\varphi(a,A)=(a',A')$; then $A'D=DA$. Thus the homomorphism $\varphi$ induces a function $\bar\varphi:\Phi\to\Phi$ given by $\bar\varphi(A)=A'$ and this function satisfies $\bar\varphi(A)D=DA$ for all $A\in\Phi$. However, in general, $\bar\varphi$ is not necessarily a homomorphism.

Recall further that:
\begin{Thm}[{\cite[Theorem~6.1]{Je}}]\label{Je}
Let $f:M\to M$ be a self-map on a compact PL-manifold of dimension $\ge3$. Then $f$ is homotopic to a map $g$ with $P_n(g)=\emptyset$ if and only if $NP_n(f)=0$.
\end{Thm}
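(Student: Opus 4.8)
The plan is to prove the two implications separately. For this I first recall the relevant Nielsen periodic point machinery: for each divisor $k\mid n$ the set $\fix(f^k)$ decomposes into fixed point classes of $f^k$, each carrying an integer index, and since $f(\fix(f^n))\subseteq\fix(f^n)$ the map $f$ permutes the fixed point classes of $f^n$, grouping them into orbits of length dividing $n$. A fixed point class $\mathbf{F}$ of $f^n$ is \emph{reducible} if it contains a fixed point class of $f^d$ for some proper divisor $d\mid n$, and $NP_n(f)$ is, up to the standard normalization, the number of essential irreducible orbits of fixed point classes of $f^n$ under $f$ (orbits of period exactly $n$ consisting of irreducible classes). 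Granting this, the forward implication is immediate: $NP_n$ is a homotopy invariant, and an essential irreducible orbit of period $n$ is non-empty and consists of genuine minimal-period-$n$ points, so any $g$ with $P_n(g)=\emptyset$ satisfies $NP_n(g)=0$, whence $NP_n(f)=0$.

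For the converse, assume $NP_n(f)=0$; I would construct a homotopy to a map $g$ with $P_n(g)=\emptyset$ in three stages, all relying on $\dim M\ge 3$ and the PL structure. \textbf{Normalization.} By a general-position homotopy, arrange that $\fix(f^k)$ is finite for every $k\mid n$ and that $f$ is PL-linear and non-degenerate near each such point. \textbf{Uniting orbits in a class.} Since the complement of a finite subset of $M$ is connected when $\dim M\ge3$, drag together the periodic orbits lying in a single $f$-orbit of fixed point classes of $f^n$, keeping the homotopy equivariant along the $f$-orbit and carrying out all moves inside a neighbourhood disjoint from $\fix(f^d)$ for the proper divisors $d\mid n$; afterwards each such orbit of classes of period $n$ is represented by a single periodic orbit whose index equals the index of the class. \textbf{Cancellation.} Each remaining orbit of period $n$ is now either reducible or inessential: a reducible one is removed by isotoping its representative orbit onto the lower-level fixed set $\fix(f^d)$ it reduces to, so that those points acquire minimal period $<n$; an inessential one has total index zero and is removed by a periodic version of the Wecken cancellation, using a Whitney-type move in dimension $\ge3$ to unlink it from the rest of $\fix(f^n)$ before cancelling. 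Since $NP_n(f)=0$ forces every irreducible orbit of period $n$ to be inessential, and orbits of period $<n$ contribute no minimal-period-$n$ points, the resulting $g$ satisfies $P_n(g)=\emptyset$.

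The main obstacle is the cancellation stage: one must delete the period-$n$ points without creating new periodic points of any period dividing $n$, and while respecting the rigid constraint that moving one point of an $f$-orbit forces the simultaneous motion of the whole orbit under iteration by $f$. Thus the classical fixed point Wecken trick must be performed equivariantly along the orbit and in a region controlled relative to the fixed sets of all the iterates $f^d$ with $d\mid n$ — this is exactly where the hypothesis $\dim M\ge3$ is indispensable — and the combinatorial bookkeeping of which period-$n$ classes are reducible (hence can be pushed down to a lower level) versus which must be cancelled outright is the delicate core of the argument. This is precisely the content of \cite[Theorem~6.1]{Je}.
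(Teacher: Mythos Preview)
The paper does not give its own proof of this statement: it is quoted verbatim as \cite[Theorem~6.1]{Je} and used as a black box, with the low-dimensional cases handled by further citations. So there is nothing in the paper to compare your argument against.

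That said, your outline is a fair high-level summary of the Jezierski--Wecken strategy for periodic points, and you correctly identify the forward implication as immediate from homotopy invariance of $NP_n$. Your converse sketch, however, remains just that---a sketch. The genuinely hard step you flag at the end (equivariant cancellation along an $f$-orbit without introducing new periodic points of period dividing $n$, and the handling of reducible classes) is the entire content of the theorem, and your description of ``isotoping a reducible orbit onto the lower-level fixed set it reduces to'' is not quite right: reducibility is a statement about Reidemeister classes, not about actual fixed points, so a reducible class of $f^n$ need not contain any fixed point of $f^d$ to push onto. The actual argument in \cite{Je} works instead by an inductive scheme over the divisor lattice of $n$, first realizing the Nielsen numbers $N(f^d)$ minimally and then carefully controlling how essential classes at each level boost to the next. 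If you intend this as a proof rather than a pointer to the literature, that inductive bookkeeping needs to be made explicit.
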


The infra-solvmanifolds of dimension $1$ or $2$ are the circle, the torus and the Klein bottle. Theorem~\ref{Je} for dimensions $1$ and $2$ is verified respectively in \cite{BGMY}, \cite{ABLSS} and \cite{Llibre, JKM, KKZ}. Immediately we have for any self-map $f$ on an infra-solvmanifold of any dimension,
\begin{align*}
\HPer(f)&=\{k\mid NP_k(f)\ne0\}.
\end{align*}
Recalling from \cite{Jiang} that
\begin{align*}
NP_n(f)&=\text{ (number of irreducible essential orbits of}\\
&\qquad\text{Reidemeister classes of $f^n$)}\x n,
\end{align*}
we have
\begin{align*}
\HPer(f)&=\{k\mid \exists\text{ an irreducible essential fixed point class of $f^k$}\}.
\end{align*}

Recall from {\cite[Propositions~9.1 and 9.3]{FL}} the following:
Let $f$ be a map on an infra-solvmanifold $\Pi\bs{S}$ of type $\E$ induced by an affine map $(d,D):S\to S$. For any $\alpha\in\Pi$, $\fix(\alpha(d,D))$ is an empty set or path connected. Hence every nonempty fixed point class of $f$ is path connected, and every isolated fixed point class forms an essential fixed point class with index
$\pm\det(I-A_*D_*)$ where $\alpha=(a,A)$. When the infra-solvmanifold $\Pi\bs{S}$ is of type $\R$, the converse also holds. Namely, every essential fixed point class of $f$ consists of a single element. Remark that $(d,D)^k$ induces the map $f^k$. Any fixed point class of $f^k$ is of the form $p(\fix(\alpha(d,D)^k))$ for some $\alpha=(a,A)\in\Pi$. It is essential if and only if it consists of a single element with index $\pm\det(I-A_*D_*^k)$.
Note further that it is reducible to $\ell$ if and only if $\ell\mid k$ and there exists $\beta\in\Pi$ such that $p(\fix(\beta(d,D)^\ell))\subset p(\fix(\alpha(d,D)^k))$, or equivalently, the Reidemeister class $[\beta]$ of $f^\ell$ is boosted up to the Reidemeister class $[\alpha]$ of $f^k$. This means that
$[\alpha]=[\beta\varphi^\ell(\beta)\varphi^{2\ell}(\beta)\cdots\varphi^{k-\ell}(\beta)]$ as the Reidemeister class of $f^k$. For some $\gamma\in\Pi$, we thus have $\alpha=\gamma(\beta\varphi^\ell(\beta)\varphi^{2\ell}(\beta)\cdots\varphi^{k-\ell}(\beta))\varphi^k(\gamma)^{-1}$.
Hence
\begin{align*}
\alpha&=(\gamma\beta\varphi^\ell(\gamma)^{-1})(\varphi^\ell(\gamma)\varphi^\ell(\beta)\varphi^{2\ell}(\gamma)^{-1})
\cdots(\varphi^{k-\ell}(\gamma)\varphi^{k-\ell}(\beta)\varphi^k(\gamma)^{-1})\\
&=\beta'\varphi^\ell(\beta')\varphi^{2\ell}(\beta')\cdots\varphi^{k-\ell}(\beta')
\end{align*}
with $\beta'=\gamma\beta\varphi^\ell(\gamma)^{-1}$. Consequently, the fixed point class $p(\fix(\alpha(d,D)^k))$ is irreducible if and only if for any $\beta\in\Pi$ and for any $\ell<k$ with $\ell\mid k$,
$$
\alpha(d,D)^k\ne(\beta(d,D)^\ell)^{k/\ell}
$$
or
$$
\alpha\ne\beta\varphi^\ell(\beta)\varphi^{2\ell}(\beta)\cdots\varphi^{k-\ell}(\beta).
$$

As a result, we can summarize the above observation as follows:

\begin{Thm}\label{T3.1}
Let $f:\Pi\bs{S}\to\Pi\bs{S}$ be a self-map on the infra-solvmanifold $\Pi\bs{S}$ of type $\E$ with an affine homotopy lift $(d,D):S\to S$. Let $\varphi:\Pi\to\Pi$ be the homomorphism induced by $(d,D)$, i.e., $\varphi(\alpha)(d,D)=(d,D)\alpha\ \forall\alpha\in\Pi$. Then
\begin{align*}
\HPer(f)&=\left\{k\ \Big|\
\begin{array}{l}
\exists \alpha=(a,A)\in\Pi \mathrm{\ such\ that}
\det(I-A_*D_*^k)\ne0\mathrm{\ and}\\
\forall\ \ell<k \mathrm{\ with\ } \ell\mid k,\ \forall\ \beta\in\Pi,\\
{\ }\qquad\qquad\alpha(d,D)^k\ne(\beta(d,D)^\ell)^{k/\ell}
\end{array}
\right\}\\
&=\left\{k\ \Big|\
\begin{array}{l}
\exists \alpha=(a,A)\in\Pi \mathrm{\ such\ that}
\det(I-A_*D_*^k)\ne0\mathrm{\ and}\\
\forall\ \ell<k \mathrm{\ with\ } \ell\mid k,\ \forall\ \beta\in\Pi,\\
{\ }\qquad\quad\alpha\ne\beta\varphi^\ell(\beta)\varphi^{2\ell}(\beta)\cdots\varphi^{k-\ell}(\beta)
\end{array}
\right\}.
\end{align*}
\end{Thm}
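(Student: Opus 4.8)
The plan is to assemble the facts already recorded in the discussion preceding the statement. The point of departure is the identity
$\HPer(f)=\{k\mid\exists\text{ an irreducible essential fixed point class of }f^k\}$,
which has been obtained from Theorem~\ref{Je} together with its low-dimensional analogues and Jiang's formula for $NP_n(f)$. Since every fixed point class of $f^k$ is of the form $p(\fix(\alpha(d,D)^k))$ for some $\alpha=(a,A)\in\Pi$, it remains only to translate the two requirements ``essential'' and ``irreducible'' into the stated conditions on $\alpha$, and then to check that the two displayed descriptions of $\HPer(f)$ agree.

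For ``essential'', I would quote the consequence of \cite[Propositions~9.1 and~9.3]{FL} recalled above: the class $p(\fix(\alpha(d,D)^k))$ is essential exactly when it is a single point, in which case its index is $\pm\det(I-A_*D_*^k)$; equivalently, it is essential if and only if $\det(I-A_*D_*^k)\ne0$. (Here $(d,D)^k$ has linear part $D^k$, so $\alpha(d,D)^k$ has linear part $A_*D_*^k$, which is why this is the relevant determinant.) Because this determinant is, up to sign, the fixed point index, it depends only on the Reidemeister class of $\alpha$, so imposing the condition on a single representative is harmless; this is what legitimises the ``$\exists\,\alpha$'' quantifier.

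For ``irreducible'', I would reproduce the manipulation already sketched: $p(\fix(\alpha(d,D)^k))$ is reducible to a proper divisor $\ell$ of $k$ precisely when some Reidemeister class $[\beta]$ of $f^\ell$ is boosted up to $[\alpha]$, i.e. $\alpha=\gamma\bigl(\beta\varphi^\ell(\beta)\cdots\varphi^{k-\ell}(\beta)\bigr)\varphi^k(\gamma)^{-1}$ for some $\gamma\in\Pi$; putting $\beta'=\gamma\beta\varphi^\ell(\gamma)^{-1}$ absorbs $\gamma$ and rewrites this as $\alpha=\beta'\varphi^\ell(\beta')\cdots\varphi^{k-\ell}(\beta')$. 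Hence $p(\fix(\alpha(d,D)^k))$ is irreducible if and only if, for every $\ell<k$ with $\ell\mid k$ and every $\beta\in\Pi$, $\alpha\ne\beta\varphi^\ell(\beta)\cdots\varphi^{k-\ell}(\beta)$. Combined with the previous paragraph, this already yields the second displayed description of $\HPer(f)$.

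Finally, to see that the first description is equivalent to the second, I would use the defining relation of $\varphi$, namely $\varphi(\beta)(d,D)=(d,D)\beta$, which gives $(d,D)^\ell\beta=\varphi^\ell(\beta)(d,D)^\ell$ and then, by induction on $k/\ell$, $\bigl(\beta(d,D)^\ell\bigr)^{k/\ell}=\beta\varphi^\ell(\beta)\varphi^{2\ell}(\beta)\cdots\varphi^{k-\ell}(\beta)(d,D)^k$. Thus $\alpha(d,D)^k=\bigl(\beta(d,D)^\ell\bigr)^{k/\ell}$ and $\alpha=\beta\varphi^\ell(\beta)\cdots\varphi^{k-\ell}(\beta)$ differ only by the right factor $(d,D)^k$, which may be cancelled: if two elements of $\Pi$ become equal after composing with $(d,D)^k$, their quotient fixes every point in the image of $(d,D)^k$ in $S$, hence is the identity because $\Pi$ acts freely on $S$. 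The only genuinely non-formal input here is the index computation of \cite{FL} underpinning the essentiality criterion; granting it, everything else is bookkeeping, and the sole delicate step is this last cancellation, which the possible singularity of $D$ does not spoil thanks to the freeness of the $\Pi$-action.
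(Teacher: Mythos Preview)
Your proposal is correct and follows essentially the same route as the paper: the theorem is stated there as a summary of the preceding discussion, and you have reproduced that discussion faithfully, invoking the same ingredients (the Jezierski Wecken-type theorem, Jiang's $NP_n$ formula, the Fel'shtyn--Lee index computation, and the boosting manipulation with $\beta'=\gamma\beta\varphi^\ell(\gamma)^{-1}$). Your final paragraph, deriving the equivalence of the two displayed conditions by cancelling $(d,D)^k$ via the freeness of the $\Pi$-action on $S$, is a welcome clarification that the paper leaves implicit.
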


In order to generalize the results of \cite{Zhao et al} from flat manifolds to infra-solvmanifolds of type $\R$, we need the following observation which is crucial in our discussion.

\begin{Lemma}\label{rational matrix}
Let $\Lambda$ be a lattice of a connected, simply connected solvable Lie group $S$ {of type $\R$}, and let $K:S\to S$ be a Lie group homomorphism such that $K(\Lambda)\subset\Lambda$. For some choice of a linear basis in the Lie algebra $\frakS$ of $S$, $K_*$ is an upper block triangular matrix with diagonal blocks integer matrices; in particular $\det K_*$ is an integer.
\end{Lemma}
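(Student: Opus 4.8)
The plan is to use the derived series of $\frakS$ as a $K_{*}$-invariant flag of $\frakS$ with abelian quotients, each carrying a $K_{*}$-stable lattice, and then read the matrix off of it. Write $\frakS=\frakS^{(0)}\supseteq\frakS^{(1)}\supseteq\cdots\supseteq\frakS^{(m)}=0$ for the derived series, $\frakS^{(i+1)}=[\frakS^{(i)},\frakS^{(i)}]$, and let $S^{(i)}\subseteq S$ be the corresponding connected Lie subgroup. Since $\frakS^{(i)}$ is nilpotent for $i\ge 1$ (the derived algebra of a solvable Lie algebra is nilpotent), it lies in the nilradical $\frakn$, so $S^{(i)}$ is a connected subgroup of the simply connected nilpotent group $\exp\frakn$ and is therefore closed and simply connected. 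As $K_{*}$ is a Lie algebra homomorphism, $K_{*}(\frakS^{(i)})=[K_{*}\frakS^{(i-1)},K_{*}\frakS^{(i-1)}]\subseteq\frakS^{(i)}$, so every $\frakS^{(i)}$ is $K_{*}$-invariant, whence $K(S^{(i)})\subseteq S^{(i)}$, while each quotient $\frakS^{(i)}/\frakS^{(i+1)}$ is abelian.

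The one nonformal ingredient --- and the only place the hypothesis ``type $\R$'' is used essentially --- is the rationality of this flag with respect to $\Lambda$: for a lattice in a connected, simply connected solvable Lie group of type $\R$, each $\Lambda S^{(i)}$ is closed, so that $\Lambda\cap S^{(i)}$ is a lattice in $S^{(i)}$ and the image of $\Lambda\cap S^{(i)}$ in $S^{(i)}/S^{(i+1)}$ is again a discrete cocompact subgroup. For a general simply connected solvable $S$ this can fail --- even $\Lambda N$, $N$ the nilradical, need not be closed --- so the hypothesis is indispensable; the statement belongs to the Mostow--Raghunathan structure theory of such lattices, which also underlies \cite{HL13-b}. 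Granting it, in each quotient $S^{(i)}/S^{(i+1)}$, an abelian simply connected group identified with $\frakS^{(i)}/\frakS^{(i+1)}\cong\bbr^{c_i}$, the image of $\Lambda\cap S^{(i)}$ is a full lattice $\cong\bbz^{c_i}$; and since $K$ carries $\Lambda\cap S^{(i)}$ into itself (using $K(\Lambda)\subseteq\Lambda$ and $K(S^{(i)})\subseteq S^{(i)}$), the linear map induced by $K_{*}$ on $\frakS^{(i)}/\frakS^{(i+1)}$ preserves $\bbz^{c_i}$, hence is an integer matrix in the basis afforded by that lattice.

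To finish, I would choose a linear basis of $\frakS$ adapted to the flag $0=\frakS^{(m)}\subset\frakS^{(m-1)}\subset\cdots\subset\frakS^{(0)}=\frakS$ --- so that each $\frakS^{(i)}$ is the span of an initial segment of the basis --- whose successive blocks of vectors project to the chosen $\bbz$-bases of the quotients $\frakS^{(m-1)}/\frakS^{(m)},\ \frakS^{(m-2)}/\frakS^{(m-1)},\ \dots,\ \frakS^{(0)}/\frakS^{(1)}$ in turn. Because $K_{*}$ preserves every $\frakS^{(i)}$, its matrix in this basis is upper block triangular, with diagonal blocks exactly the integer matrices of the maps induced on the successive quotients; hence $\det K_{*}$ equals the product of the determinants of those blocks, a product of integers, so $\det K_{*}\in\bbz$.

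The step I expect to be the genuine obstacle is the rationality statement of the second paragraph; everything else is linear-algebra bookkeeping. When $S$ is nilpotent this rationality is just Mal'cev's classical theory, and in general it is precisely the extra input --- via Mostow's theorem on the nilradical --- needed to transport Mal'cev's picture to infra-solvmanifolds of type $\R$.
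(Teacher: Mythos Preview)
Your argument is correct and reaches the same conclusion, but it is organized around a different filtration than the paper's. You filter by the derived series $\frakS^{(i)}$ of the Lie algebra; the paper instead first passes to the nilradical $N$ of $S$ --- citing \cite[Proposition~5.1]{Wilking} for the single rationality input that $\Lambda\cap N=\nil(\Lambda)$ is a lattice in $N$ --- and then, inside the nilpotent group $N$, builds the filtration from the \emph{lattice} side via the isolators $\Lambda_i=\sqrt[\Lambda]{\gamma_i(\nil(\Lambda))}$ of the lower central series of $\nil(\Lambda)$. The practical difference is where the burden of the ``rationality'' step lies. The paper needs exactly one such statement (for the nilradical) and a specific citation disposes of it; thereafter the filtration is defined group-theoretically from $\Lambda$ and the free-abelian quotients and integrality of the diagonal blocks are automatic. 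Your route requires that $\Lambda\cap S^{(i)}$ be a lattice in $S^{(i)}$ for every $i$, a claim you rightly flag as the nontrivial point. It is true for type~$\R$ (and once one has the nilradical step, the $i\ge2$ cases reduce to Mal'cev theory inside $S^{(1)}$; the case $i=1$ uses that $[\frakS,\frakS]=\mathrm{Image}(\ad X)$ is spanned by rational vectors when $X$ ranges over $\log\Lambda$), but it is less directly quotable than the paper's reference. In exchange, your treatment is more uniform --- no separate nilpotent/solvable split --- and the resulting block structure would serve equally well in the later proof of Theorem~\ref{T3.3}, where the paper explicitly reuses its preferred-basis filtration.
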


\begin{proof}
First we assume that $S$ is nilpotent and thus $\Lambda$ is a finitely generated torsion-free nilpotent group. The lower central series of $\Lambda$ is defined inductively via $\gamma_1(\Lambda)=\Lambda$ and $\gamma_{i+1}(\Lambda)=[\Lambda,\gamma_i(\Lambda)]$. The isolator of a subgroup $H$ of $\Lambda$ is defined by
$$
\sqrt[\Lambda]{H}=\{x\in\Lambda\mid x^k\in H \text{ for some $k\ge1$}\}.
$$
It is known (\cite[p.\!~473]{Passman}, \cite[Chap.\!~1]{Dekimpe} or \cite{KL-05}) that the sequence
$$
\Lambda=\Lambda_1\supset\Lambda_2=\sqrt[\Lambda]{\gamma_2(\Lambda)}\supset\cdots\supset
\Lambda_c=\sqrt[\Lambda]{\gamma_c(\Lambda)}\supset\Lambda_{c+1}=1
$$
forms a central series with $\Lambda_i/\Lambda_{i+1}\cong\bbz^{k_i}$. Now we can choose a generating set
$$
\bfa=\{\bfa_1,\cdots,\bfa_c\}
$$
in such a way that $\Lambda_i$ is the group generated by $\Lambda_{i+1}$ and $\bfa_i=\{a_{i1},\cdots,a_{in_i}\}$. We refer to $\bfa=\{\bfa_1,\cdots,\bfa_c\}$ as a {preferred basis} of $\Lambda$. Under the diffeomorphism $\log:S\to\frakS$, the image $\log\bfa$ of $\bfa$ is a basis of the vector space $\frakS$. Note also that  $\Lambda_i=\Lambda\cap{\gamma_i(S)}$ is a lattice of $\gamma_i(S)$ and a fully invariant subgroup of $\Lambda$. Since $K(\Lambda)\subset\Lambda$, it follows that $K(\Lambda_i)\subset\Lambda_i$ and the differential of $K$ is expressed as a rational matrix with respect to the basis $\log\bfa$ of the form
$$
\left[\begin{matrix}
      {K_c}_*&*&\cdots&*\\
      0&{K_{c-1}}_*&\cdots&*\\
      \vdots&\vdots&\ddots&\vdots\\
      0&0&\cdots&{K_1}_*
        \end{matrix}\right]
$$
where each square matrix ${K_i}_*$ is an integer matrix, see also \cite[Lemma~4.2]{LL-JGP}.

Now we go back to the cases where $S$ is solvable of type $\R$. According to \cite[Remark~8.2]{Wilking}, $\Lambda$ is a positive polycyclic group and $S$ is its supersolvable completion. Let $N$ be the maximal connected nilpotent normal subgroup of $S$. Then $\Lambda\cap N$ is the nilradical $\nil(\Lambda)$ of $\Lambda$, which is a lattice of $N$, see \cite[Proposition~5.1]{Wilking}. Hence we have the following diagram
$$
\CD
1@>>>N@>>>S@>>>S/N\cong\bbr^s@>>>1\\
@.@AAA@AAA@AAA\\
1@>>>\nil(\Lambda)@>>>\Lambda@>>>\Lambda/\nil(\Lambda)\cong\bbz^s@>>>1
\endCD
$$
By the assumption on $K$, $K$ restricts to a homomorphism $\kappa:\Lambda\to\Lambda$. Thus $\kappa$ and hence $K$ in turn restricts to $\kappa':\nil(\Lambda)\to\nil(\Lambda)$ and then induces a homomorphism $\bar\kappa:\bbz^s\to\bbz^s$. We choose a preferred basis of $\nil(\Lambda)$ under which $K':N\to N$ yields a rational matrix $K'_*$ with diagonal blocks integer matrices as above. Now we can complete the set of generators of $\nil(\Lambda)$ to a set of generators $\bfa=\{\bfa_0,\bfa_1,\cdots,\bfa_c\}$, called a \emph{preferred basis}, of $\Lambda$ so that $\bar\kappa$ induces an integer matrix $\bar{K}_*$ and so $\kappa$ induces an upper block triangular matrix
\begin{align*}
K_*&=\left[\begin{matrix}K'_*&*\\0&\bar{K}_*\end{matrix}\right]
\end{align*}
so that all diagonal blocks are integer matrices and hence $\det K_*$ is an integer.
\end{proof}

\begin{Rmk}\label{basis}
Let $\Lambda$ be a lattice of a connected, simply connected solvable Lie group $S$ of type $\R$. In the proof of the above lemma, we can choose a preferred basis (generator) $\bfa$ of $\Lambda$ so that $\log\bfa$ is a (linear) basis of the Lie algebra $\frakS$ of $S$ and if $K$ is a homomorphism on $S$ such that $K(\Lambda)\subset\Lambda$ then $K_*$ is an upper block triangular matrix with diagonal blocks integer matrices with respect to the ordered basis $\log\bfa$. We also refer to $\log\bfa$ as a \emph{preferred basis} of $\Lambda$ or $\frakS$.
\end{Rmk}

\section{Density of homotopy minimal periods}

In this section, we will generalize the main result of \cite{Zhao et al} from flat manifolds to infra-solvmanifolds of type $\R$.

Let $f$ be a self-map on an infra-solvmanifold $\Pi\bs{S}$ of type $\R$ with holonomy group $\Phi$. Let $f$ have an affine homotopy lift $(d,D)$. Recall that $f$ induces a homomorphism $\varphi:\Pi\to\Pi$ satisfying the identity \eqref{homo}: $\varphi(\alpha)(d,D)=(d,D)\alpha$, $\forall\alpha\in\Pi\subset S\rtimes\aut(S)$.
Let $\Gamma=\Pi\cap S$. It is not necessarily true that $\varphi(\Gamma)\subset\Gamma$. Using Lemma~\ref{fully inv}, we can choose a lattice $\Lambda\subset\Gamma$ of $S$ so that $\varphi(\Lambda)\subset\Lambda$. Thus for any $\lambda=(\lambda,I)\in\Lambda$, we have $\varphi(\lambda)=(\varphi(\lambda),I)$ and so
$$
(\varphi(\lambda),I)(d,D)=(d,D)(\lambda,I).
$$
Evaluating at the identity $1$ of $S$, we obtain that $\varphi(\lambda)\cdot d=d\cdot D(\lambda)$. Consequently, we have that
\begin{align*}
\varphi|_\Lambda=\mu(d)D.
\end{align*}
Furthermore, for any $(a,A)\in\Pi$, since $\Gamma$ is a normal subgroup of $\Pi$, we have $(a,A)(\gamma,I)(a,A)^{-1}\in\Gamma$; this implies $(\mu(a)A)(\Gamma)\subset\Gamma$ and so $(\mu(a)A)(\Lambda)\subset\Lambda$. Namely, we have homomorphisms $\mu(d)D, \mu(a)A:S\to S$ such that $(\mu(d)D)(\Lambda)\subset\Lambda$ and $(\mu(a)A)(\Lambda)\subset\Lambda$. We have to notice here that it is not necessary to have that $D(\Lambda), A(\Lambda)\subset\Lambda$. By Remark~\ref{basis}, we can choose a preferred basis $\bfa$ of $\Lambda$ so that $(\mu(d)D)_*=\Ad(d)D_*$ and $(\mu(a)A)_*=\Ad(a)A_*$ are upper block triangular rational matrices with diagonal blocks integer matrices with respect to the basis $\log\bfa$ of $\frakS$.

In what follows, we shall denote $\mu(d)D$ and $\mu(a)A$ by $\bbD$ and $\bbA$, respectively. As far as the determinants and the eigenvalues of the differentials of $\bbD$ and $\bbA$ are concerned, we may assume that they are \emph{integer matrices} $\bbD_*$ and $\bbA_*$. We can call them \emph{linearizations} of $D$, $(d,D)$ or $f$, and $\alpha=(a,A)\in\Pi$, respectively. We denote also the set of all integer linear combinations of the basis vectors in $\log\bfa$ by simply $\calZ$. Then we have $\bbD_*(\calZ)\subset\calZ$ and $\bbA_*(\calZ)\subset\calZ$.

In the following, we provide three lemmas that generalize \cite[Lemmas~4.3 and 4.5, Proposition~4.6]{Zhao et al} from flat manifolds to infra-solvmanifolds of type $\R$. These are essential in proving our main results.

\begin{Lemma}\label{L4.3}
Let $M=\Pi\bs{S}$ be an infra-solvmanifold of type $\R$ with holonomy group $\Phi$. Let $f$ be a self-map on $M$ with an affine homotopy lift $(d,D)$. Assume that
\begin{enumerate}
\item[$(1)$] any eigenvalue $\lambda$ of $\bbD_*$ of modulus $1$ is a root of unity, but not $1$;
\item[$(2)$] $\det \bbD_*\ne0,\pm1$.
\end{enumerate}
Then there exists a positive integer $N_0$ such that
$$
\Big|\det\left(\frac{I-\bbD_*^{k\ell}}{I-\bbD_*^\ell}\right)\Big|>1
$$
for all positive integers $k$ and $\ell$, provided their positive prime divisors are all greater than $N_0$.
\end{Lemma}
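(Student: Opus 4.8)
The plan is to pass to the eigenvalues of $\bbD_*$, which by the discussion preceding the lemma may be taken to be an integer matrix. Let $\lambda_1,\dots,\lambda_n$ be these eigenvalues, listed with multiplicity. For positive integers $k,\ell$ such that no $\lambda_i$ is an $\ell$-th root of unity, the matrix $I-\bbD_*^\ell$ is invertible and
\[
\Big|\det\Big(\frac{I-\bbD_*^{k\ell}}{I-\bbD_*^\ell}\Big)\Big|=\prod_{i=1}^n\Big|\frac{1-\lambda_i^{k\ell}}{1-\lambda_i^\ell}\Big|,
\]
so I would try to bound this product of moduli from below. The idea is to split the $\lambda_i$ according to whether $|\lambda_i|>1$, $|\lambda_i|=1$ or $|\lambda_i|<1$, to show that the partial product over $\{|\lambda_i|\le1\}$ is bounded below by a positive constant depending only on $\bbD_*$, and to show that the partial product over $\{|\lambda_i|>1\}$ --- a set that is nonempty thanks to hypothesis $(2)$ --- can be made as large as we wish by enlarging $N_0$.

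First I would require $N_0$ to be at least the largest order occurring among the eigenvalues of modulus $1$: by hypothesis $(1)$ each such $\lambda_i$ is a root of unity different from $1$, hence of finite order $m_i\ge2$. Then, whenever every prime divisor of $k$ and of $\ell$ exceeds $N_0$, each of $k,\ell,k\ell$ is coprime to every $m_i$; in particular no $\lambda_i$ is an $\ell$-th root of unity, so the displayed identity is valid, and moreover $\lambda_i^\ell,\lambda_i^{k\ell}$ are again primitive $m_i$-th roots of unity, so that $|1-\lambda_i^{k\ell}|\ge2\sin(\pi/m_i)$ while $|1-\lambda_i^\ell|\le2$, giving $|1-\lambda_i^{k\ell}|/|1-\lambda_i^\ell|\ge\sin(\pi/m_i)>0$. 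For the eigenvalues with $|\lambda_i|<1$, the crude bounds $|1-\lambda_i^{k\ell}|\ge1-|\lambda_i|$ and $|1-\lambda_i^\ell|\le2$ bound the corresponding factor below by $\tfrac12(1-|\lambda_i|)>0$. Multiplying, the partial product over $\{|\lambda_i|\le1\}$ is at least the positive constant $c:=\prod_{|\lambda_i|=1}\sin(\pi/m_i)\cdot\prod_{|\lambda_i|<1}\tfrac12(1-|\lambda_i|)$, which does not depend on $k$ or $\ell$.

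Next I would bring in hypothesis $(2)$. Since $\bbD_*$ is an integer matrix, $\det\bbD_*=\prod_i\lambda_i$ is an integer, and as it is different from $0,\pm1$ we get $\prod_i|\lambda_i|=|\det\bbD_*|\ge2$, so at least one $\lambda_i$ satisfies $|\lambda_i|>1$. For such an eigenvalue and for $k\ge2$, the estimates $|1-\lambda_i^{k\ell}|\ge|\lambda_i|^{k\ell}-1$ and $|1-\lambda_i^\ell|\le2|\lambda_i|^\ell$ give
\[
\Big|\frac{1-\lambda_i^{k\ell}}{1-\lambda_i^\ell}\Big|\ge\frac{|\lambda_i|^{k\ell}-1}{2|\lambda_i|^\ell}\ge\frac{|\lambda_i|^{(k-1)\ell}-1}{2}\ge\frac{|\lambda_i|^{N_0}-1}{2},
\]
where the last two steps use $\ell\ge1$ and $k-1\ge N_0$, the latter because every prime divisor of $k$ exceeds $N_0$. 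Hence the partial product over $\{|\lambda_i|>1\}$ is at least $h(N_0):=\prod_{|\lambda_i|>1}\tfrac12(|\lambda_i|^{N_0}-1)$, and since this index set is nonempty and each of its factors tends to $\infty$ with $N_0$, we have $h(N_0)\to\infty$. Combining the two parts, $\big|\det\big((I-\bbD_*^{k\ell})/(I-\bbD_*^\ell)\big)\big|\ge c\,h(N_0)$ for every admissible pair $(k,\ell)$, so it remains only to enlarge $N_0$ further so that also $c\,h(N_0)>1$.

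The essential input is hypothesis $(2)$: through the integrality of $\det\bbD_*$ it is precisely what guarantees an eigenvalue strictly outside the unit circle, and that eigenvalue drives all the growth; everything else reduces to elementary estimates on $|1-\lambda^m|$. The one genuinely delicate point will be ensuring that $k$ and $\ell$ are coprime to the finitely many orders $m_i$ of the unit-modulus eigenvalues, which is exactly what the hypothesis on their prime divisors provides. (For $k=1$ the quotient $(I-\bbD_*^{\ell})/(I-\bbD_*^{\ell})$ is the identity matrix, so the statement is to be read for the values $k\ge2$ that arise in the applications via Theorem~\ref{T3.1}.)
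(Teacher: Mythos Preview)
Your argument is correct and follows essentially the same route as the paper: factor the determinant over the eigenvalues, split them according to $|\lambda_i|<1$, $|\lambda_i|=1$, $|\lambda_i|>1$, bound the first two groups below by a positive constant independent of $k,\ell$, and use hypothesis~(2) to force the $|\lambda_i|>1$ factors (nonempty) to dominate as $N_0$ grows. The only minor difference is that the paper rules out $\lambda_i^\ell=1$ via the degree bound $[\bbq(\lambda_i):\bbq]\le m$ on roots of the characteristic polynomial, whereas you take $N_0$ larger than the finitely many orders $m_i$ directly; your explicit constants are cleaner than the paper's existence statements, and your remark that the inequality must be read for $k\ge2$ applies equally to the paper's version.
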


\begin{proof}
Let $\lambda_1,\cdots,\lambda_m$ be the eigenvalues of $\bbD_*$ counted with multiplicities. We first show that all $1-\lambda_i^\ell$ are nonzero. In fact, if $1-\lambda_i^\ell=0$ for some $i$; then $\lambda_i^\ell=1$ and so $\lambda_i$ is a primitive $\ell_0$-th root of unity for some $\ell_0$ where $1\le\ell_0\mid\ell$. Since $\lambda_i\ne1$, $\ell_0>1$. If $p$ is a prime divisor of $\ell_0$ then it is a prime divisor of $\ell$ and so $p>N_0$. {It follows that $[\bbq(\lambda_i):\bbq]\ge p-1>m$. This contradicts the fact that $[\bbq(\lambda_i):\bbq]$ is smaller than the size $m$ of $\bbD_*$.}
Thus $I-\bbD_*^\ell$ is invertible and $(I-\bbD_*^{k\ell})/(I-\bbD_*^\ell)=I+\bbD_*^\ell+\cdots+\bbD_*^{(k-1)\ell}$ and
$$
\det\left(\frac{I-\bbD_*^{k\ell}}{I-\bbD_*^\ell}\right)=\prod_{i=1}^m\frac{1-\lambda_i^{k\ell}}{1-\lambda_i^\ell}.
$$

Let $N_0>m+1$ and let $\ell$ be a positive integer all of whose prime divisors are greater than $N_0$.
Assume $|\lambda_i|=1$. By our assumption, $\lambda_i\ne1$ and is a root of unity. The above argument shows that $1-\lambda_i^\ell\ne0$ and by the same reasoning $1-\lambda_i^{k\ell}\ne0$; thus $(1-\lambda_i^{k\ell})/(1-\lambda_i^\ell)$ is nonzero and finite {for each such $k$ and $\ell$}. Hence we can choose a constant $\delta>0$ such that for all such $k$ and $\ell$
$$
\Big|\frac{1-\lambda_i^{k\ell}}{1-\lambda_i^\ell}\Big|>\delta.
$$
For $\lambda_i$ with $|\lambda_i|\ne1$, as $N_0\to\infty$, we have
$$
\Big|\frac{1-\lambda_i^{k\ell}}{1-\lambda_i^\ell}\Big|=|1+\lambda_i^\ell+\lambda_i^{2\ell}+\cdots+\lambda_i^{k-1}|\to
\begin{cases}1 &\text{when $|\lambda_i|<1$}\\
\infty &\text{when $|\lambda_i|>1$.}
\end{cases}
$$

By the assumption that $|\det \bbD_*|>1$, there exists an eigenvalue whose absolute value is bigger than $1$. Hence as $N_0\to\infty$ we have
$$
\prod_{i=1}^m\Big|\frac{1-\lambda_i^{k\ell}}{1-\lambda_i^\ell}\Big|\to\infty.
$$
Consequently, for $N_0$ large enough, the lemma is proved.
\end{proof}

\begin{Lemma}\label{L4.5}
Let $M=\Pi\bs{S}$ be an infra-solvmanifold of type $\R$ with holonomy group $\Phi$. Let $f$ be a self-map on $M$ with an affine homotopy lift $(d,D)$. Assume that
\begin{enumerate}
\item[$(1)$] any eigenvalue $\lambda$ of $\bbD_*$ of modulus $1$ is a root of unity, but not $1$;
\item[$(2)$] $\det \bbD_*\ne0,\pm1$.
\end{enumerate}
Then there exists a positive integer $N_1$ such that
$$
|\det(I-\bbD_*^{k})|>\sum_{1<\ell\!\!~\mid\!\!~k}|\det(I-\bbD_*^{k/\ell})|
$$
for all positive integers $k$, provided all its positive prime divisors are greater than $N_1$.
\end{Lemma}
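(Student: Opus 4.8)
The plan is to reduce the statement to size estimates for $|\det(I-\bbD_*^j)|=\prod_{i=1}^{m}|1-\lambda_i^j|$, where $\lambda_1,\dots,\lambda_m$ are the eigenvalues of $\bbD_*$ with multiplicity. Put $\rho=\prod_{|\lambda_i|>1}|\lambda_i|$; since $\det\bbD_*$ is an integer with $|\det\bbD_*|=\prod_i|\lambda_i|\ge 2$ by hypothesis~(2) and $\prod_{|\lambda_i|\le 1}|\lambda_i|\le 1$, we get $\rho\ge 2$. The heuristic is that $|\det(I-\bbD_*^k)|$ grows like $\rho^k$, whereas each summand $|\det(I-\bbD_*^{k/\ell})|$ is at most $2^m\rho^{k/\ell}$; and when every prime factor of $k$ exceeds $N_1$, each $\ell>1$ dividing $k$ already satisfies $\ell>N_1$, so every proper divisor $d=k/\ell$ is $<k/N_1$. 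Thus each summand is smaller than $|\det(I-\bbD_*^k)|$ by an exponential factor, which easily dominates the number $\tau(k)-1<k$ of summands. This is the same circle of ideas as Lemma~\ref{L4.3}; in fact that lemma already yields $|\det(I-\bbD_*^k)|>|\det(I-\bbD_*^{k/\ell})|$ for each individual $\ell$, but controlling the whole sum requires the quantitative version below.

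First I would prove a lower bound: there is a constant $c_1=c_1(\bbD_*)>0$ with $|\det(I-\bbD_*^k)|\ge c_1\rho^k$ whenever the prime factors of $k$ are large enough. Split the eigenvalues by modulus. If $|\lambda_i|>1$ then $|1-\lambda_i^k|\ge|\lambda_i|^k-1\ge\tfrac12|\lambda_i|^k$ once $k$ exceeds a constant depending on $\lambda_i$. If $|\lambda_i|<1$ then $|1-\lambda_i^k|\ge 1-|\lambda_i|^k\ge\tfrac12$ for $k$ large. If $|\lambda_i|=1$ then by hypothesis~(1) $\lambda_i$ is a root of unity $\ne 1$ of some order $d_i\ge 2$ with $\phi(d_i)=[\bbq(\lambda_i):\bbq]\le m$, so $d_i$ ranges over a finite set whose prime factors are bounded in terms of $m$; once $N_1$ exceeds that bound we have $\gcd(k,d_i)=1$, hence $\lambda_i^k$ again has order $d_i$, in particular $\lambda_i^k\ne 1$, and $|1-\lambda_i^k|$ is at least the positive minimum of $|1-\zeta|$ over the finitely many roots of unity of order between $2$ and $\max_i d_i$. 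Multiplying these bounds gives $|\det(I-\bbD_*^k)|\ge c_1\rho^k$.

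Next comes the easy upper bound $|\det(I-\bbD_*^j)|=\prod_i|1-\lambda_i^j|\le\prod_i(1+|\lambda_i|^j)\le 2^m\rho^j$, valid for all $j\ge 1$. Hence, for $k>1$ with all prime factors exceeding $N_1$, since every proper divisor $d$ of $k$ is $<k/N_1$ and there are fewer than $k$ of them,
\[
\sum_{1<\ell\mid k}|\det(I-\bbD_*^{k/\ell})|\le 2^m k\,\rho^{k/N_1}.
\]
Comparing with the lower bound, it is enough to arrange $c_1\rho^k>2^m k\,\rho^{k/N_1}$, i.e.\ $\rho^{k(1-1/N_1)}>(2^m/c_1)k$; taking $N_1\ge 2$ and using $\rho\ge 2$, this follows from $2^{k/2}>(2^m/c_1)k$, which holds for all $k\ge K_0$ where $K_0$ depends only on $\bbD_*$. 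Since $k>1$ forces $k>N_1$, it suffices to choose $N_1$ larger than $K_0$, than the bound on the prime factors of the $d_i$, and than the constants appearing in the cases $|\lambda_i|\ne 1$. The case $k=1$ is immediate: the sum is empty and $\det(I-\bbD_*)\ne 0$ because $1$ is not an eigenvalue of $\bbD_*$, again by hypothesis~(1).

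The step I expect to be the main obstacle is the uniform lower bound for the modulus-one eigenvalues: one must keep $\lambda_i^k$ bounded away from $1$ uniformly in $k$, and this rests on the arithmetic fact---already used in Lemma~\ref{L4.3}---that a root of unity occurring as an eigenvalue of an $m\times m$ matrix has order with all prime factors bounded by a function of $m$, so coprimality of $k$ with that order is forced once $N_1$ is large. The remaining ingredients are routine: elementary estimates for products over the eigenvalues, together with $\tau(k)<k$ and $k>N_1$.
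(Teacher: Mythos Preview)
Your argument is correct and proceeds along a genuinely different route from the paper's proof.

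The paper works \emph{eigenvalue by eigenvalue}: for each $\lambda_i$ with $|\lambda_i|>1$ it shows by contradiction, using the divisor bound $\tau(k)\le 2\sqrt{k}$, that
\[
\sum_{1<\ell\mid k}|1-\lambda_i^{k/\ell}|<|1-\lambda_i^k|,
\]
and then combines these per-eigenvalue inequalities via the general inequality $\sum_\ell\prod_i a_{i,\ell}\le\prod_i\sum_\ell a_{i,\ell}$ to reach the determinant statement. You instead work \emph{globally}: you establish a two-sided estimate $c_1\rho^{k}\le|\det(I-\bbD_*^{k})|\le 2^{m}\rho^{k}$ (the lower bound for $k$ with large prime factors, the upper bound unconditionally) in terms of the single growth parameter $\rho=\prod_{|\lambda_i|>1}|\lambda_i|\ge 2$, and then compare directly, using only that every proper divisor of $k$ is below $k/N_1$ and the trivial bound $\tau(k)<k$.

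What each approach buys: the paper's route isolates the mechanism at the level of individual eigenvalues and invokes a sharper divisor bound, but the final assembly step $\sum\prod\le\prod\sum$ is somewhat delicate (it needs control over the factors with $|\lambda_i|\le 1$ as well, which is where the constants $\delta_1,\delta_2$ enter). Your route is more self-contained and quantitative: the exponential gap $\rho^{k}$ versus $\rho^{k/N_1}$ swallows the crude divisor count $\tau(k)<k$ effortlessly, and the treatment of the modulus-one eigenvalues (via $\phi(d_i)\le m\Rightarrow$ prime factors of $d_i$ are $\le m+1$) is exactly the arithmetic input already isolated in Lemma~\ref{L4.3}. Your handling of the case $k=1$ and the observation that $k>1$ forces $k>N_1$ are both clean. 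Either argument yields the lemma; yours is arguably the more transparent of the two.
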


\begin{proof}
Let $\lambda_1,\cdots,\lambda_m$ be the eigenvalues of $\bbD_*$ counted with multiplicities. From our assumptions and hence from the observations in the proof of Lemma~\ref{L4.3}, we have:
\begin{itemize}
\item Since $\det\bbD_*\ne0$, all $\lambda_i$ are nonzero.
\item If $|\lambda_i|=1$ then $\lambda_i\ne1$ and $\lambda_i$ is a root of unity, and $1-\lambda_i^k\ne0$ for all $k$ whose prime divisors are $>N_0$ where $N_0$ is a positive integer chosen in the previous lemma; hence there are constants $0<\delta_1<\delta_2$ such that for all $\lambda_i$ with $|\lambda_i|\le1$, we have $\delta_1\le|1-\lambda_i^k|\le\delta_2$ for all $k$ with this property.
\end{itemize}
For those eigenvalues with $|\lambda_i|>1$, we claim that there is a sufficiently large $k$ such that
$$
\sum_{1<\ell\!\!~\mid\!\!~k}|1-\lambda_i^{k/\ell}|<|1-\lambda_i^k|.
$$
Suppose on the contrary that for any $K>0$ there is $k_0>K$ such that
$$
|1-\lambda_i^{k_0}|\le\sum_{1<\ell\!\!~\mid\!\!~k_0}|1-\lambda_i^{k_0/\ell}|.
$$
Then
\begin{align*}
|1-\lambda_i^{k_0}|
\le \sum_{1<\ell\!\!~\mid\!\!~k_0}|1-\lambda_i^{k_0/2}|<\tau(k_0)|1-\lambda_i^{k_0/2}|
\end{align*}
where $\tau(k)$ is the number of all the divisors of $k$. {Since $\tau(k)\le2\sqrt{k}$ (see \cite[Exercise~3.2.17]{JM-book}),} we have
$$
2\sqrt{k_0}>|1+\lambda_i^{k_0/2}|\ge|\lambda_i|^{k_0/2}-1,
$$
which contradicts the obvious fact that $\lim_{k\to0}\sqrt{k}/(|\lambda_i|^{k/2}-1)=0$.

Therefore we can choose $N_1\ge N_0$ such that if $k$ is a positive integer whose prime divisors are $\ge N_1$, then
\begin{align*}
\sum_{1<\ell\!\!~\mid\!\!~k}|\det(I-\bbD_*^{k/\ell})|
&=\sum_{1<\ell\!\!~\mid\!\!~k}\left(\prod_{i=1}^m|1-\lambda_i^{k/\ell}|\right)\\
&\le \prod_{i=1}^m\left(\sum_{1<\ell\!\!~\mid\!\!~k}|1-\lambda_i^{k/\ell}|\right)\\
&<\prod_{i=1}^m|1-\lambda_i^k|=|\det(I-\bbD_*^k)|.\qedhere
\end{align*}
\end{proof}

\begin{Lemma}\label{P4.6}
Let $M=\Pi\bs{S}$ be an infra-solvmanifold of type $\R$ with holonomy group $\Phi$. Let $f$ be a self-map on $M$ with an affine homotopy lift $(d,D)$. Assume that
\begin{enumerate}
\item[$(1)$] any eigenvalue $\lambda$ of $\bbD_*$ of modulus $1$ is a root of unity, but not $1$;
\item[$(2)$] $\det \bbD_*\ne0,\pm1$.
\end{enumerate}
Then there exists a positive integer $N_2$ such that the equality
$$
\calZ=\bigcup_{\substack{\ell\!\!~\mid\!\!~k\\ 1<\ell<k}} (I+\bbD_*^\ell+\bbD_*^{2\ell}+\cdots+\bbD_*^{k-\ell})(\calZ)
$$
is impossible for all positive integers $k$, provided its positive prime divisors are all greater than $N_2$.
\end{Lemma}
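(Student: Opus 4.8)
The plan is to convert the set-theoretic equality into an inequality between the indices of certain finite-index sublattices of $\calZ$, and then contradict it by means of Lemma~\ref{L4.5}. Set $N_2:=N_1$, where $N_1\ge N_0$ is the constant produced in Lemma~\ref{L4.5} and $N_0$ the one in Lemma~\ref{L4.3}, and let $k$ be a positive integer all of whose prime divisors are $>N_2$. If $k$ has no divisor $\ell$ with $1<\ell<k$ (for instance $k=1$ or $k$ prime), the index set is empty, so the right-hand side is $\emptyset\ne\calZ$ and there is nothing to prove; assume therefore that such $\ell$ exist. For each such $\ell$ put
$$
M_\ell:=I+\bbD_*^{\ell}+\bbD_*^{2\ell}+\cdots+\bbD_*^{k-\ell},
$$
a geometric sum of $k/\ell$ powers of $\bbD_*$, hence an integer matrix. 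Since every prime divisor of $\ell$ divides $k$ and is therefore $>N_0$, the argument in the proof of Lemma~\ref{L4.3} shows that $I-\bbD_*^{\ell}$ and $I-\bbD_*^{k}$ are invertible; telescoping gives $M_\ell(I-\bbD_*^{\ell})=I-\bbD_*^{k}$, so $M_\ell=(I-\bbD_*^{k})(I-\bbD_*^{\ell})^{-1}$ and $\det M_\ell=\det(I-\bbD_*^{k})/\det(I-\bbD_*^{\ell})$ is a nonzero integer. Hence $M_\ell(\calZ)$ is a subgroup of the full-rank lattice $\calZ\cong\bbz^m$ of finite index
$$
[\calZ:M_\ell(\calZ)]=|\det M_\ell|=\frac{|\det(I-\bbD_*^{k})|}{|\det(I-\bbD_*^{\ell})|},
$$
since the index of the image of an injective integer endomorphism of a lattice is the absolute value of its determinant.

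Next I would use that a full-rank lattice cannot be covered too cheaply by finite-index sublattices. Suppose, for contradiction, that $\calZ=\bigcup_{\ell}M_\ell(\calZ)$, the union over divisors $\ell$ of $k$ with $1<\ell<k$. Put $H:=\bigcap_{\ell}M_\ell(\calZ)$, which has finite index in $\calZ$ as a finite intersection of finite-index subgroups; in the finite quotient $G:=\calZ/H$ the images $M_\ell(\calZ)/H$ are subgroups whose union is all of $G$, so the trivial union bound $|G|\le\sum_{\ell}|M_\ell(\calZ)/H|$ gives
$$
1\ \le\ \sum_{\substack{\ell\mid k\\ 1<\ell<k}}\frac{1}{[\calZ:M_\ell(\calZ)]}\ =\ \frac{1}{|\det(I-\bbD_*^{k})|}\sum_{\substack{\ell\mid k\\ 1<\ell<k}}|\det(I-\bbD_*^{\ell})|.
$$
(Equivalently, one may count lattice points in the cubes $[-n,n]^m$, using that $\#(M_\ell(\calZ)\cap[-n,n]^m)/\#(\calZ\cap[-n,n]^m)\to[\calZ:M_\ell(\calZ)]^{-1}$ as $n\to\infty$.)

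Finally I would reindex the divisor sum in Lemma~\ref{L4.5}: with $\ell'=k/\ell$, the inequality $|\det(I-\bbD_*^{k})|>\sum_{1<\ell\mid k}|\det(I-\bbD_*^{k/\ell})|$ reads $|\det(I-\bbD_*^{k})|>\sum_{\ell'\mid k,\ \ell'<k}|\det(I-\bbD_*^{\ell'})|$, and this last sum is at least $\sum_{\ell'\mid k,\ 1<\ell'<k}|\det(I-\bbD_*^{\ell'})|$ (it has the extra nonnegative $\ell'=1$ term). Hence $\sum_{1<\ell<k,\ \ell\mid k}|\det(I-\bbD_*^{\ell})|<|\det(I-\bbD_*^{k})|$, which contradicts the displayed inequality above. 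Therefore the equality asserted in the lemma is impossible, proving the lemma with $N_2=N_1$. The only nonroutine points are the index formula $[\calZ:M_\ell(\calZ)]=|\det M_\ell|$ and the subadditivity of density under finite unions; all the quantitative content has been packaged into Lemmas~\ref{L4.3} and~\ref{L4.5}, so I anticipate no real obstacle in this step — the genuine difficulty was already met in proving Lemma~\ref{L4.5}.
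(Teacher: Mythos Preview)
Your proof is correct and follows essentially the same route as the paper's: both reduce the assumed equality to the cardinality inequality $|\det(I-\bbD_*^{k})|\le\sum_{\ell\mid k,\ 1<\ell<k}|\det(I-\bbD_*^{\ell})|$ via index counting in a finite quotient of $\calZ$, and then contradict Lemma~\ref{L4.5}. The only cosmetic difference is that the paper passes to $\calZ/(I-\bbD_*^{k})(\calZ)$, using the containment $(I-\bbD_*^{k})(\calZ)=M_\ell(I-\bbD_*^{\ell})(\calZ)\subset M_\ell(\calZ)$ to identify $M_\ell(\calZ)/(I-\bbD_*^{k})(\calZ)\cong\calZ/(I-\bbD_*^{\ell})(\calZ)$ directly, whereas you quotient by $H=\bigcap_\ell M_\ell(\calZ)$ and use the union bound; the resulting inequality and the appeal to Lemma~\ref{L4.5} are identical.
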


\begin{proof}
Remark that the proof of Lemma~\ref{L4.3} shows that there exists a positive integer $N_0$ such that for all positive integers $k$ whose prime divisors are greater than $N_0$, $I-\bbD_*^\ell$ has nonzero determinant if $\ell\mid k$. Since $\bbD_*(\calZ)\subset\calZ$, we have
\begin{align*}
(I-\bbD_*^k)(\calZ)&=(I+\bbD_*^\ell+\bbD_*^{2\ell}+\cdots+\bbD_*^{k-\ell})(I-\bbD_*^\ell)(\calZ)\\
&\subset(I+\bbD_*^\ell+\bbD_*^{2\ell}+\cdots+\bbD_*^{k-\ell})(\calZ)
\end{align*}
for all $k$ and $\ell$ with $\ell\mid k$. Thus if we had the equality
$$
\calZ=\bigcup_{\substack{\ell\!\!~\mid\!\!~k,1<\ell<k}}
(I+\bbD_*^\ell+\bbD_*^{2\ell}+\cdots+\bbD_*^{k-\ell})(\calZ)
$$
we would have
\begin{align*}
\calZ/(I-\bbD_*^k)(\calZ)
&=\left(\bigcup_{\ell\!\!~\mid\!\!~k, \ell\ne1,k} (I+\bbD_*^\ell+\bbD_*^{2\ell}+\cdots+\bbD_*^{k-\ell})(\calZ)\right)/(I-\bbD_*^k)(\calZ)\\
&=\bigcup_{\ell\!\!~\mid\!\!~k, \ell\ne1,k} \left((I+\bbD_*^\ell+\bbD_*^{2\ell}+\cdots+\bbD_*^{k-\ell})(\calZ)/(I-\bbD_*^k)(\calZ)\right)\\
&\cong \bigcup_{\ell\!\!~\mid\!\!~k, \ell\ne1,k} \left(\calZ/(I-\bbD_*^\ell)(\calZ)\right)
\end{align*}
and hence we would have
$$
|\det(I-\bbD_*^k)|\le \sum_{\ell\!\!~\mid\!\!~k, \ell\ne1,k} |\det(I-\bbD_*^\ell)|
\le\sum_{1<\ell\!\!~\mid\!\!~k} |\det(I-\bbD_*^{k/\ell})|.
$$
This contradicts Lemma~\ref{L4.5}.
\end{proof}

Now we are ready to state and prove our main results.

\begin{Thm}\label{T3.3}
Let $M=\Pi\bs{S}$ be an infra-solvmanifold of type $\R$ with holonomy group $\Phi$. Let $f$ be a self-map on $M$ with an affine homotopy lift $(d,D)$. Let $\varphi:\Pi\to\Pi$ be the homomorphism satisfying \eqref{homo}. Assume that
\begin{enumerate}
\item[$(1)$] any eigenvalue $\lambda$ of $\bbD_*$ of modulus $1$ is a root of unity, but not $1$;
\item[$(2)$] $\det \bbD_*\ne0,\pm1$;
\item[$(3)$] $\gfix(\bar\varphi:\Phi\to\Phi)=\{I\}$.
\end{enumerate}
Then there exists an integer $N$ with the following property: if $k$ is a positive integer with prime factorization $k=p_1^{n_1}\cdots p_s^{n_s}$ such that all $p_i$'s are greater than $N$, then $k\in\HPer(f)$.
\end{Thm}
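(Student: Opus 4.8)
The plan is to deduce the statement from Theorem~\ref{T3.1}: it suffices to produce, for each $k$ all of whose prime divisors exceed a threshold $N$ to be chosen, an $\alpha=(a,A)\in\Pi$ that is at once \emph{essential} for $f^k$ (i.e. $\det(I-A_*D_*^k)\ne0$) and \emph{irreducible} (i.e. $\alpha\ne\beta\varphi^{\ell}(\beta)\varphi^{2\ell}(\beta)\cdots\varphi^{k-\ell}(\beta)$ for every divisor $\ell<k$ of $k$ and every $\beta\in\Pi$). I will look for $\alpha$ inside the lattice $\Lambda$, on which $\varphi$ restricts to the linearization $\bbD=\mu(d)D$ and for which Lemmas~\ref{L4.3}--\ref{P4.6} were proved. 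To fix $N$, note first that hypothesis~(2) makes $\det\bbD_*\ne0$, so $D_*$ and $D$ are invertible; consequently $\bar\varphi$ is well defined on $\Phi$ (namely $\bar\varphi(A)=DAD\inv$), it is a \emph{bijection} of the finite set $\Phi$, and $\varphi(\Gamma)\subseteq\Gamma$, so we may as well take $\Lambda=\Gamma$ in the set-up preceding Lemma~\ref{L4.3}. Let $q$ be the order of the permutation $\bar\varphi$; every prime dividing $q$ or $|\Phi|$ is $\le|\Phi|$. Put $N=\max\{N_2,|\Phi|+1\}$, with $N_0\le N_1\le N_2$ the constants from Lemmas~\ref{L4.3},~\ref{L4.5},~\ref{P4.6}. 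Then for $k$ with all prime divisors $>N$, every divisor of $k$ is coprime to $q$, so $\langle\bar\varphi^{\,k}\rangle=\langle\bar\varphi\rangle$ and $\gfix(\bar\varphi^{\,k})=\gfix(\bar\varphi)=\{I\}$ by~(3).

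Fix such a $k$. By the argument proving Lemma~\ref{P4.6}, used together with Lemma~\ref{L4.5} so as to include also the divisor $\ell=1$ (which Lemma~\ref{P4.6} omits), the subgroups $(I+\bbD_*^{\ell}+\bbD_*^{2\ell}+\cdots+\bbD_*^{k-\ell})(\calZ)$, ranging over divisors $\ell<k$ of $k$, do not cover $\calZ$. Pick $z_0\in\calZ$ outside their union and let $\alpha\in\Lambda$ be the element with preferred-basis coordinates $z_0$; then $z_0\ne0$, $\alpha\ne1$, and $\alpha$ has trivial linear part. \emph{Essentiality:} the fixed point class of $f^k$ carrying $\alpha$ is a single point of index $\pm\det(I-\bbD_*^k)$ (its linear part being trivial, after passing to the linearization as explained before Lemma~\ref{L4.3}), and $\det(I-\bbD_*^k)\ne0$, because by~(1) an eigenvalue $\lambda$ of $\bbD_*$ of modulus $1$ with $\lambda^k=1$ would be a primitive root of unity whose order has a prime factor $p>N$ exceeding the size $m$ of $\bbD_*$, forcing $[\bbq(\lambda):\bbq]\ge p-1>m$ --- the contradiction already met in the proof of Lemma~\ref{L4.3}.

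\emph{Irreducibility:} suppose to the contrary that $\alpha=\beta\varphi^{\ell}(\beta)\varphi^{2\ell}(\beta)\cdots\varphi^{k-\ell}(\beta)$ for some divisor $\ell<k$ of $k$ and some $\beta=(b,B)\in\Pi$, and set $m=k/\ell>1$. Applying the linear-part homomorphism $L\colon\Pi\to\Phi$ and the identity $L\circ\varphi=\bar\varphi\circ L$ gives $I=B\,\bar\varphi^{\ell}(B)\,\bar\varphi^{2\ell}(B)\cdots\bar\varphi^{(m-1)\ell}(B)$. With $\psi=\bar\varphi^{\ell}$ and $G=B\psi(B)\psi^2(B)\cdots\psi^{m-1}(B)$, the elementary identity $B\inv G=\psi(G)\,\psi^{m}(B)\inv$ shows that $G=I$ forces $\psi^{m}(B)=\bar\varphi^{\,k}(B)=B$, whence $B=I$ by the choice of $N$ and~(3). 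So $\beta\in\Gamma=\Lambda$, every $\varphi^{j\ell}(\beta)=\bbD^{j\ell}(\beta)$ lies in $\Lambda$, and $\alpha=\beta\,\bbD^{\ell}(\beta)\,\bbD^{2\ell}(\beta)\cdots\bbD^{k-\ell}(\beta)$ in $\Lambda$. Linearizing this identity --- descending through the preferred series of $\Lambda$ as in Remark~\ref{basis}, so that at each graded level the product collapses to $I+\bbD_*^{\ell}+\cdots+\bbD_*^{k-\ell}$ --- puts $z_0$ into $(I+\bbD_*^{\ell}+\bbD_*^{2\ell}+\cdots+\bbD_*^{k-\ell})(\calZ)$, against the choice of $z_0$. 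Hence $\alpha$ is irreducible and essential, so $k\in\HPer(f)$ by Theorem~\ref{T3.1}, and this $N$ works for the whole prime-factorization class.

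I expect two points to carry the real weight. The first, routine but needing care, is justifying the use of the integer matrix $\bbD_*$ (rather than $D_*$) on $\Lambda$ when computing the fixed point index in the essentiality step. The main obstacle is the closing move of the irreducibility step: turning the group identity $\alpha=\beta\,\bbD^{\ell}(\beta)\cdots\bbD^{k-\ell}(\beta)$, set in the (in general non-abelian) lattice $\Lambda$, into the linear membership $z_0\in(I+\bbD_*^{\ell}+\cdots+\bbD_*^{k-\ell})(\calZ)$ --- i.e. making the purely matrix-theoretic content of Lemmas~\ref{L4.3}--\ref{P4.6} speak to the actual reducibility condition of Theorem~\ref{T3.1}. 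By contrast the holonomy is handled cleanly, once hypothesis~(3) is combined with the fact (from~(2)) that $\bar\varphi$ is a bijection of $\Phi$ fixing only $I$; isolating the case $\ell=1$ is a minor extra step, covered by the same ``images do not cover $\calZ$'' estimate via Lemma~\ref{L4.5}.
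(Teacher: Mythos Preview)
Your overall architecture is the paper's: pick $\alpha$ with trivial holonomy, use hypothesis~(3) together with the coprimality of $k$ and the order of $\bar\varphi$ to force $B=I$ (hence $\beta\in\Gamma$), then contradict a non-covering statement coming from Lemmas~\ref{L4.3}--\ref{P4.6}. Your treatment of the holonomy reduction and your explicit inclusion of the divisor $\ell=1$ (which Lemma~\ref{P4.6} as stated omits) are both fine; the latter is in fact tidier than the paper.

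The gap is exactly where you flag it, and your sketched fix does not close it. From the group identity $\alpha=\beta\,\bbD^{\ell}(\beta)\cdots\bbD^{k-\ell}(\beta)$ in the non-abelian $\Lambda$ you cannot conclude that the full coordinate vector $z_0$ lies in $(I+\bbD_*^{\ell}+\cdots+\bbD_*^{k-\ell})(\calZ)$: Baker--Campbell--Hausdorff corrections and the off-diagonal blocks of $\bbD_*$ spoil this. What ``collapse at each graded level'' actually yields is only that the image of $\alpha$ in a single abelian quotient $\Gamma_i/\Gamma_{i+1}$ equals $(I+\bar\bbD_i^{\ell}+\cdots+\bar\bbD_i^{k-\ell})$ applied to the image of $\beta$ there---and even that only once $\beta\in\Gamma_i$. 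The paper's remedy is to abandon the full $\calZ$ and work at \emph{one} level: since $\bbD_*$ is block upper-triangular with integer diagonal blocks $\bar\bbD_i$, some $\bar\bbD_{i_0}$ already satisfies hypotheses~(1) and~(2); apply (the $\ell=1$-augmented) Lemma~\ref{P4.6} to $\bar\bbD_{i_0}$ on $\Gamma_{i_0}/\Gamma_{i_0+1}\cong\bbz^{k_{i_0}}$, and choose $\alpha\in\Gamma_{i_0}$ whose image there avoids the union. The descent forcing $\beta\in\Gamma_{i_0}$ uses that each $I+\bar\bbD_j^{\ell}+\cdots+\bar\bbD_j^{k-\ell}$ is injective for $j<i_0$, which follows from $\det(I-\bbD_*^k)=\prod_j\det(I-\bar\bbD_j^k)\ne0$; this step is implicit in the paper as well.

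One smaller point: essentiality in Theorem~\ref{T3.1} requires $\det(I-D_*^k)\ne0$, whereas you verify $\det(I-\bbD_*^k)\ne0$. The paper bridges this with the identity $\det(I-D_*^k)=\det(I-\bbD_*^k)$ from \cite[Lemma~3.3]{LL-Nagoya} (equivalently \cite[Theorem~3.3]{HL13-b}); your phrase ``after passing to the linearization'' should be replaced by that citation.
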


\begin{proof}
Choose an integer $N$ so that $N\ge\max\{m+1,N_2,\ \text{order of $\bar\varphi$}\}$. Let $k=p_1^{n_1}\cdots p_s^{n_s}$ be a prime factorization of $k$ such that all $p_i$'s are greater than $N$. Then we have to show that $k\in\HPer(f)$. For this purpose, by Theorem~\ref{T3.1}, we need to find $\alpha=(a,A)\in\Pi$ satisfying:
\begin{itemize}
\item $\det(I-A_*D_*^k)\ne0$,
\item $\forall\ \ell<k \mathrm{\ with\ } \ell\mid k,\ \forall\ \beta\in\Pi,\ \
\alpha(d,D)^k\ne(\beta(d,D)^\ell)^{k/\ell}$.
\end{itemize}

We will show that we can choose $\alpha=(a,I)$ in $\Gamma\subset\Pi$. Recall first that the proof of Lemma~\ref{L4.3} shows that there exists a positive integer $N_0$ such that for all positive integers $k$ whose prime divisors are larger than $N_0$, $I-\bbD_*^\ell$ has nonzero determinant if $\ell\mid k$. Since $N\ge N_0$, $\det(I-\bbD_*^\ell)\ne0$ for all $\ell\mid k$. In particular, $\det(I-\bbD_*^k)\ne0$. By \cite[Lemma~3.3]{LL-Nagoya} or \cite[Theorem~3.3]{HL13-b}, we have $\det(I-D_*^k)=\det(I-\bbD_*^k)\ne0$.

It remains to prove the second condition. We assume on the contrary that for any $\alpha=(a,I)\in\Gamma$, there exists $\ell<k$ with $\ell\mid k$ and there exist $\beta=(b,B)\in\Pi$ such that $\alpha(d,D)^k=(\beta(d,D)^\ell)^{k/\ell}$, which is equivalent to
\begin{align}\label{ho}
\alpha=\beta\varphi^\ell(\beta)\varphi^{2\ell}(\beta)\cdots\varphi^{k-\ell}(\beta).\tag{$\dagger$}
\end{align}

Now we recall that since $D$ is an automorphism, $\varphi$ is the conjugation by $(d,D)$, $\varphi|_\Gamma=\mu(d)D$ and $\bar\varphi$ is the conjugation by $D$.
The matrix part (the holonomy part) of both sides of \eqref{ho} yields
$$
I=B\bar\varphi^\ell(B)\bar\varphi^{2\ell}(B)\cdots\bar\varphi^{k-\ell}(B).
$$
Taking $\bar\varphi^\ell$, we have
$$
I=\bar\varphi^\ell(B)\bar\varphi^{2\ell}(B)\cdots\bar\varphi^{k-\ell}(B)\bar\varphi^k(B).
$$
Hence
\begin{align*}
\bar\varphi^k(B)^{-1}=B^{-1}=\bar\varphi^\ell(B)\bar\varphi^{2\ell}(B)\cdots\bar\varphi^{k-\ell}(B).
\end{align*}
This gives us $\bar\varphi^k(B)=B$. By the choice of $k$, $k$ must be relatively prime to the order $p$ of $\bar\varphi$. Choose $x,y\in\bbz$ so that $kx+py=1$. Since $\bar\varphi=\bar\varphi^{kx+py}=(\bar\varphi^k)^x$, it follows that $\bar\varphi(B)=B$. Since $\gfix(\bar\varphi)=\{I\}$ by our assumption, we have $B=I$. Plugging into \eqref{ho}, we have
$$
a=b\varphi^\ell(b)\varphi^{2\ell}(b)\cdots\varphi^{k-\ell}(b).
$$
Since $\varphi|_\Gamma=\mu(d)D=\bbD$, we have
$$
a=b\bbD^\ell(b)\bbD^{2\ell}(b)\cdots\bbD^{k-\ell}(b)
$$
for some $\ell<k$ with $\ell\mid k$.

Now we have to show that for any $\ell<k$ with $\ell\mid k$
$$
\{e\bbD^\ell(e)\bbD^{2\ell}(e)\cdots\bbD^{k-\ell}(e)\mid e\in\Gamma\}\ne\Gamma.
$$
Recall in the proof of Lemma~\ref{rational matrix} that $\Gamma$ has a central series
$$
\Gamma=\Gamma_0\supset\nil(\Gamma)=\Gamma_1\supset\Gamma_2\supset\cdots\supset\Gamma_{c}\supset\Gamma_{c+1}=1
$$
with $\Gamma_i/\Gamma_{i+1}\cong\bbz^{k_i}$. Since $\bbD(\Gamma_i)\subset\Gamma_i$, it induces $\bar\bbD_i:\Gamma_i/\Gamma_{i+1}\to\Gamma_i/\Gamma_{i+1}$. Note also that
$$
\bbD_*=\left[\begin{matrix}\bar\bbD_c&*&\cdots&*\\
0&\bar\bbD_{c-1}&\cdots&*\\
\vdots&\vdots&\ddots&\vdots\\
0&0&\cdots&\bar\bbD_0\end{matrix}\right]
$$
where $\bar\bbD_i$ are integer matrices. Hence some $\bar\bbD_i$ satisfies the assumptions (1) and (2). By Lemma~\ref{P4.6}, we have
$$
\{\bar{e}+\bar\bbD_i^\ell(\bar{e})+\bar\bbD_i^{2\ell}(\bar{e})+\cdots+\bar\bbD_i^{k-\ell}(\bar{e})\mid \bar{e}\in\Gamma_i/\Gamma_{i+1}\}\ne\Gamma_i/\Gamma_{i+1}.
$$
This proves our assertion. Hence $a\in\Gamma$ can be chosen so that
$$
a\ne b\varphi^\ell(b)\varphi^{2\ell}(b)\cdots\varphi^{k-\ell}(b).
$$
for any $b\in\Gamma$. This contradiction proves the second condition.
\end{proof}

\begin{Cor}\label{T2.3}
Let $M=\Pi\bs{S}$ be an infra-solvmanifold of type $\R$ with holonomy group $\Phi$ and let $f$ be a self-map on $M$ with an affine homotopy lift $(d,D)$. Let $\varphi:\Pi\to\Pi$ be the homomorphism satisfying \eqref{homo}. Assume that
\begin{enumerate}
\item[$(1)$] any eigenvalue $\lambda$ of $\bbD_*$ of modulus $1$ is a root of unity, but not $1$;
\item[$(2)$] $\det \bbD_*\ne0,\pm1$;
\item[$(3)$] $\gfix(\bar\varphi:\Phi\to\Phi)=\{I\}$.
\end{enumerate}
Then $\DH(f)$ is positive.
\end{Cor}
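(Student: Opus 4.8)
The plan is to derive the corollary from Theorem~\ref{T3.3} by an elementary density count, since the substantive work is already done. First I would invoke Theorem~\ref{T3.3} to obtain an integer $N$ with the property that every positive integer $k$ all of whose prime factors exceed $N$ lies in $\HPer(f)$. Let $p_1<p_2<\cdots<p_r$ be the primes $\le N$ and put $P=p_1p_2\cdots p_r$. Then the set
$$
A=\{\,k\in\bbn\mid\text{every prime divisor of }k\text{ is }>N\,\}
$$
is exactly the set of positive integers coprime to $P$ (because $P$ is squarefree and is the product of all primes $\le N$), hence $A$ is a union of residue classes modulo $P$, namely the $\prod_{i=1}^r(p_i-1)$ classes $c\bmod P$ with $\gcd(c,P)=1$.

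Next I would combine $A\subset\HPer(f)$ with the periodicity of $A$. For every $n$ one has $\#(\HPer(f)\cap[0,n])\ge\#(A\cap[0,n])$, and since $A$ is periodic modulo $P$ the ratio $\#(A\cap[0,n])/n$ converges as $n\to\infty$ to the natural density $d(A)=\prod_{i=1}^r\bigl(1-\tfrac1{p_i}\bigr)$. Therefore
$$
\DH(f)=\liminf_{n\to\infty}\frac{\#(\HPer(f)\cap[0,n])}{n}\ \ge\ \lim_{n\to\infty}\frac{\#(A\cap[0,n])}{n}\ =\ \prod_{i=1}^r\Bigl(1-\frac1{p_i}\Bigr)\ >\ 0 ,
$$
which is the assertion; one even obtains the explicit lower bound $\DH(f)\ge\prod_{p\le N}(1-1/p)$ for the constant $N$ produced in the proof of Theorem~\ref{T3.3}.

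I do not expect any genuine obstacle: the whole difficulty of the statement is concentrated in Theorem~\ref{T3.3}, and the only points to be careful about here are (i) recognizing that ``no prime factor $\le N$'' is the same as ``coprime to $P$'', so that $A$ is a genuinely periodic subset of $\bbn$, and (ii) recalling that a periodic subset of $\bbn$ has an honest natural density, which is precisely what converts the $\liminf$ appearing in the definition of $\DH(f)$ into a clean positive quantity rather than merely a lower bound for something that might oscillate.
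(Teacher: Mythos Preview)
Your proof is correct and follows the same strategy as the paper: invoke Theorem~\ref{T3.3} and then bound $\DH(f)$ from below by the density of a periodic set of integers with no small prime factors. The paper is slightly less efficient, passing through the single residue class $\{k\equiv1\pmod{N!}\}$ to obtain the cruder bound $\DH(f)\ge 1/N!$, whereas your use of all residue classes coprime to $P=\prod_{p\le N}p$ yields the sharper estimate $\DH(f)\ge\prod_{p\le N}(1-1/p)$.
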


\begin{proof}
By Theorem~\ref{T3.3}, there exists an integer $N$ with the following property: for any positive integer $k=p_1^{n_1}\cdots p_s^{n_s}$ with all $p_i$'s distinct primes and greater than $N$, $k\in\HPer(f)$. Thus
$$
\HPer(f)\supset\{k\mid \text{any prime divisor of $k$ is $>N$}\}.
$$
Let $q_1,\cdots q_\ell$ be the all prime numbers which are smaller than or equal to $N$. Then the set
$$
\{k\mid k\equiv1\!\!\mod{q_1\cdots q_\ell}\}
$$
is contained in the set on the right-hand side of the above. For, if $k\equiv1\!\!\mod{q_1\cdots q_\ell}$ and if $p$ is a prime divisor of $k$ with $p\le N$ then $p=q_j$ for some $j$; thus $q_j\mid k$ and $q_j\mid k-1$ and hence $q_j=1$, a contradiction.

Furthermore, we have that $N!\mid k-1$ implies $q_1\cdots q_\ell\mid k-1$. This shows that
$$
\HPer(f)\supset\{k\mid k\equiv1\!\!\mod{N!}\}
$$
and the set on the right-hand side has density $1/N!$. Consequently,
\begin{align*}
\DH(f)&\ge1/N!>0.\qedhere
\end{align*}
\end{proof}

A special solvmanifold is an infra-solvmanifold with the trivial holonomy group. Hence the third condition of Corollary~\ref{T2.3} on such a manifold is automatically fulfilled. Immediately we have:

\begin{Cor}\label{lattice}
Let $f$ be a self-map on a special solvmanifold $M$ with a Lie group homomorphism $D$ as a homotopy lift. Assume that
\begin{enumerate}
\item[$(1)$] any eigenvalue $\lambda$ of $\bbD_*$ of modulus $1$ is a root of unity, but not $1$;
\item[$(2)$] $\det \bbD_*\ne0,\pm1$;
\end{enumerate}
Then $\DH(f)$ is positive.
\end{Cor}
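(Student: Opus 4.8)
The plan is to read this off directly from Corollary~\ref{T2.3}, the only point being that the third hypothesis there is automatic for a special solvmanifold. Indeed, a special solvmanifold $M=\Gamma\bs S$ is exactly an infra-solvmanifold $\Pi\bs S$ with $\Pi=\Gamma$ a lattice of $S$, so its holonomy group is trivial: $\Phi=\Pi/\Gamma=\{I\}$. Hence the induced function $\bar\varphi:\Phi\to\Phi$ is the identity on the one-element group and $\gfix(\bar\varphi)=\Phi=\{I\}$ holds vacuously, which is precisely condition~(3) of Corollary~\ref{T2.3}.

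Next I would observe that the remaining two hypotheses match verbatim. By Theorem~\ref{homotopy lift}(2) the self-map $f$ of the special solvmanifold $M$ of type $\R$ admits a Lie group homomorphism $D:S\to S$ as a homotopy lift; regarded as the affine map $(1,D)$ it has trivial translation part, so its linearization is $\bbD=\mu(1)D=D$ and $\bbD_*=D_*$. Thus assumptions~(1) and~(2) of the present statement, namely that every modulus-one eigenvalue of $\bbD_*$ be a root of unity but not $1$ and that $\det\bbD_*\ne 0,\pm1$, are literally assumptions~(1) and~(2) of Corollary~\ref{T2.3}.

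With all three hypotheses of Corollary~\ref{T2.3} in force, it yields $\DH(f)>0$; unwinding, Theorem~\ref{T3.3} produces an integer $N$ with $k\in\HPer(f)$ whenever every prime factor of $k$ exceeds $N$, so $\HPer(f)\supset\{k\mid k\equiv 1\pmod{N!}\}$, a set of density $1/N!$, whence $\DH(f)\ge 1/N!>0$. There is no genuine obstacle here: the entire content lies in Corollary~\ref{T2.3}, and the sole things to notice are the vanishing of the holonomy group and the fact that in the trivial-holonomy case the linearization $\bbD_*$ coincides with $D_*$, so no separate linearization step is needed.
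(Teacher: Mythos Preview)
Your proposal is correct and follows the same approach as the paper: the paper simply observes that a special solvmanifold has trivial holonomy group, so condition~(3) of Corollary~\ref{T2.3} is automatic, and the result follows immediately. Your additional remark that the affine lift is $(1,D)$, whence $\bbD=\mu(1)D=D$ and $\bbD_*=D_*$, is a helpful clarification the paper leaves implicit; the final ``unwinding'' paragraph is unnecessary since Corollary~\ref{T2.3} already delivers the conclusion, but it does no harm.
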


\section{Computational results}

In this section, applying our main result we will illustrate the method to determine the density of the homotopy minimal periods of all maps on infra-solvmanifolds up to dimension three.

For infra-solvmanifolds up to dimension $3$, there are only three possibilities for the solvable Lie group $G$ on which the manifold is modeled. It can be modeled on either the Abelian groups $\bbr^n (n\le3)$, the $2$-step nilpotent Heisenberg group $\Nil$ or the $2$-step solvable Lie group $\Sol$.

We can find a complete description of $\HPer(f)$ for maps $f$ on tori in \cite{BGMY} and \cite{JL}, and on the Klein bottle in \cite{KKZ}. In what follows we shall consider the remaining infra-solvmanifolds $\Pi\bs{G}$ of dimension $3$.

For any self-map $f$ on the manifold $\Pi\bs{G}$, let $\varphi:\Pi\to\Pi$ be a homomorphism induced by $f$.
Consider an affine map $(d,D)$ on $G$ satisfying \eqref{homo}. To apply Corollary~\ref{T2.3}, we have to consider the case where $\bbD=\mu(d)D$ is invertible. If this is the case, then \eqref{homo} says that $\varphi$ is the conjugation by $(d,D)$, that is, $\varphi(\alpha)=(d,D)\alpha(d,D)^{-1}$. If $\alpha=(a,I)\in\Gamma$ then
$$
\varphi(\alpha)=(d,D)(a,I)(d,D)^{-1}=(dD(a)d^{-1},I)=(\mu(d)D(a),I).
$$
Here $\mu(d)$ is the automorphism on $G$ obtained by conjugating by the element $d\in G$. Thus $\varphi(\Gamma)\subset\Gamma$ and $\varphi|_\Gamma=\mu(d)D=\bbD$, and hence {$\bar\varphi$ is the conjugation by $D$}. In particular $\bar\varphi:\Phi\to\Phi$ is an isomorphism.

We start with the following easy observation.
\begin{Lemma}\label{nontrivial fix}
Let $\Phi$ be a group isomorphic to $\bbz_2, \bbz_4$ or $\bbz_6$. If $\psi$ is an isomorphism on $\Phi$, then $\gfix(\psi)$ is not a trivial group.
\end{Lemma}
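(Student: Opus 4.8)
The plan is to use the elementary observation that each of the groups $\bbz_2$, $\bbz_4$, $\bbz_6$ is cyclic of even order, and hence contains exactly one element of order two. Call this element $t$; explicitly, $t$ is the nontrivial element of $\bbz_2$, the element ``$2$'' of $\bbz_4$, and the element ``$3$'' of $\bbz_6$. Since every automorphism of a group sends an element of order $n$ to an element of order $n$, and $\Phi$ has a \emph{unique} element of order two, we must have $\psi(t)=t$. Therefore $t\in\gfix(\psi)$, and since $t$ is not the identity, $\gfix(\psi)\ne\{I\}$.

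In writing this up I would first record the standard fact that a finite cyclic group of even order $2m$ has a unique element of order two, namely the element $m x_0$ where $x_0$ is a chosen generator (equivalently, the kernel of the ``multiplication by $2$'' endomorphism has order two). Specializing to $2m=2,4,6$ produces the element $t$ listed above. Then I would invoke the order-preservation property of group automorphisms to get $\psi(t)=t$, which finishes the proof.

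The statement is genuinely elementary, so there is no real obstacle; the only point that deserves a word of care is the case $\Phi\cong\bbz_2$, where the unique element of order two is the generator itself, so in fact $\gfix(\psi)=\Phi$ — but this is still a nontrivial group, as claimed. Alternatively, one could simply note that $\aut(\bbz_n)\cong(\bbz/n\bbz)^\times$ has order $1$, $2$, $2$ for $n=2,4,6$ respectively and check the (at most two) cases directly; the uniform argument via the order-two element is the cleaner route and is the one I would present.
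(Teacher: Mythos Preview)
Your argument is correct: the unique element of order two in a cyclic group of even order is necessarily fixed by every automorphism, so $\gfix(\psi)$ contains a nonidentity element. The paper itself gives no proof, labeling the lemma an ``easy observation,'' so your write-up supplies more than what the authors felt necessary.
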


\begin{Lemma}\label{Z_2^2}
Let $\Phi$ be a group with presentation
$$
\Phi=\langle x,y\mid x^2=y^2=1, xy=yx\rangle,
$$
and let $\psi$ be an isomorphism on $\Phi$. Then $\gfix(\psi)=1$ if and only if $\psi$ satisfies one of the following:
\begin{itemize}
\item $\psi(x)=y$, $\psi(y)=xy$
\item $\psi(x)=xy$, $\psi(y)=x$
\end{itemize}
\end{Lemma}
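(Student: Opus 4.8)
The plan is to reduce everything to an elementary count of permutations of a three‑element set. Identify $\Phi=\langle x,y\mid x^2=y^2=1,\ xy=yx\rangle$ with the additive group $\bbf_2^2$, so that $\aut(\Phi)\cong\GL(2,\bbf_2)$. The group $\Phi$ has exactly three nontrivial elements, $x$, $y$, and $xy$, and since any two of them generate $\Phi$ while all nontrivial elements have order $2$ and $\Phi$ is abelian, any bijection of the set $T=\{x,y,xy\}$ extends uniquely to an automorphism of $\Phi$; conversely an automorphism permutes $T$ and is determined by that permutation. Thus $\psi\mapsto\psi|_T$ is a bijection $\aut(\Phi)\xrightarrow{\ \sim\ }\frakS_T\cong\frakS_3$.

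Next observe that $\gfix(\psi)=1$ holds precisely when $\psi$ fixes none of $x$, $y$, $xy$, i.e.\ when the permutation $\psi|_T$ of $T$ is fixed‑point free. Among the six elements of $\frakS_3$, the only fixed‑point‑free ones are the two $3$‑cycles. Writing out the two $3$‑cycles on $T=\{x,y,xy\}$ and translating back: the cycle $x\mapsto y\mapsto xy\mapsto x$ gives $\psi(x)=y$, $\psi(y)=xy$ (and then automatically $\psi(xy)=\psi(x)\psi(y)=y\cdot xy=x$), while the cycle $x\mapsto xy\mapsto y\mapsto x$ gives $\psi(x)=xy$, $\psi(y)=x$ (and then $\psi(xy)=xy\cdot x=y$). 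These are exactly the two automorphisms in the statement, and both are genuine automorphisms, so the equivalence follows.

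I do not expect a real obstacle here: the only point needing a word of care is the claim that every permutation of $\{x,y,xy\}$ is induced by an automorphism, and this is immediate since $\Phi$ is abelian of exponent $2$, so any bijection of $\Phi$ fixing $1$ that carries a generating pair to a generating pair is an isomorphism of $\Phi$ onto itself; alternatively one simply invokes $\aut(\Phi)\cong\GL(2,\bbf_2)\cong\frakS_3$ acting on the three nonzero vectors. Everything else is the trivial remark that a permutation of three objects is fixed‑point free iff it is a $3$‑cycle.
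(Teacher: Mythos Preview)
Your argument is correct: identifying $\Phi\cong\bbf_2^{\,2}$ so that $\aut(\Phi)\cong\GL(2,\bbf_2)\cong\frakS_3$ acts on the three nonzero vectors, and then noting that the fixed-point-free permutations of a three-element set are precisely the two $3$-cycles, gives exactly the two automorphisms listed.

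As for comparison, the paper does not actually supply a proof of this lemma; it is introduced as an ``easy observation'' and left to the reader, so there is no competing argument to weigh yours against. Your route via derangements in $\frakS_3$ is the natural structural explanation; the only alternative would be a bare enumeration of the six automorphisms of $\Phi$ and a check of $\gfix$ for each, which amounts to the same computation without the organizing isomorphism $\aut(\Phi)\cong\frakS_3$.
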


\subsection{Flat manifolds of dimension three}

We have a complete classification of three-dimensional Bieberbach groups. There are six orientable ones and four nonorientable ones, see the book \cite[Theorems~3.5.5 and 3.5.9]{Wolf}. Every group has an explicit representation into $\bbr^3\rtimes \GL(4,\bbz)$ (not into $\bbr^4\rtimes O(4)$) in this book. Of course one of them is $\mathfrak{G}_1=\bbz^3$. Let
$$
e_1=\left[\begin{matrix}1\\0\\0\end{matrix}\right],\
e_2=\left[\begin{matrix}0\\1\\0\end{matrix}\right],\
e_3=\left[\begin{matrix}0\\0\\1\end{matrix}\right],
$$
and let $t_i=(e_i,I)\in\bbr^3\rtimes\GL(3,\bbz)$. Then $t_1,t_2$ and $t_3$ generate the subgroup $\Gamma$ of $\bbr^3\rtimes \GL(3,\bbz)$, which is isomorphic to the group of all integer vectors of $\bbr^3$.

The Bieberbach group $\mathfrak{G}_2$, $\mathfrak{G}_4$, $\frakG_5$, $\mathfrak{B}_1$ or $\mathfrak{B}_2$
has the holonomy group which is isomorphic to $\bbz_2,\bbz_4$ or $\bbz_6$.
By Lemma~\ref{nontrivial fix}, we cannot apply Corollary~\ref{T2.3}.

\subsubsection{The Bieberbach group $\mathfrak{G}_3$ $($\cite{Zhao et al}$)$}
Let $\alpha=(a,A)\in\bbr^3\rtimes\GL(3,\bbz)$ where
$$
a=\left[\begin{matrix}\frac{1}{3}\\0\\0\end{matrix}\right],\
A=\left[\begin{matrix}1&0&\hspace{8pt}0\\0&0&-1\\0&1&-1\end{matrix}\right].
$$
Then
$$
\mathfrak{G}_3=\langle t_1,t_2,t_3,\alpha\mid [t_i,t_j]=1, \alpha^3=t_1,  \alpha t_2\alpha^{-1}=t_3, \alpha t_3\alpha^{-1}=t_2^{-1}t_3^{-1}\rangle
$$
with the holonomy group $\Phi=\langle A\rangle\cong\bbz_3$.

Let $\varphi:\mathfrak{G}_3\to\mathfrak{G}_3$ be any homomorphism with invertible $\bbD$. Since $\bar\varphi:\Phi\to\Phi$ is going to be an isomorphism with $\gfix(\bar\varphi)=\{I\}$, we must have that $\bar\varphi(A)=A^2$. Thus $\varphi$ has the form
$$
\varphi(t_2)=t_1^{k_2}t_2^{m_2}t_3^{n_2},\
\varphi(t_3)=t_2^{k_3}t_2^{m_3}t_3^{n_3},\
\varphi(\alpha)=\alpha^{3k+2}t_2^{m}t_3^{n}.
$$
Because the relations $\alpha t_2\alpha^{-1}=t_3$ and $\alpha t_3\alpha^{-1}=t_2^{-1}t_3^{-1}$ are preserved by $\varphi$, it follows that
$k_2=k_3$ and $k_3=-k_2-k_3$; hence $k_2=k_3=0$, and $m_3=-m_2+n_2$ and $n_3=-m_2$. Hence
$$
\bbD_*=
\left[\begin{matrix}3\ell+2&0&0\\0&m_2&-m_2+n_2\\0&n_2&-m_2\end{matrix}\right].
$$
We claim that if $(m_2,n_2) \notin\{(0,0),(-1,-1),(\pm1,0),(0,\pm1),(1,1)\}$, then $\DH(f)>0$. First we observe that $\det\bbD_*=(3\ell+2)(m_2^2-m_2n_2+n_2^2)$ and $m_2^2-m_2n_2+n_2^2=\frac{1}{2}(m_2^2+(m_2-n_2)^2+n_2^2)\ge0$. If $\frac{1}{2}(m_2^2+(m_2-n_2)^2+n_2^2)<2$, we must have that either $|m_2|$ or $|n_2|$ is less than $2$. If $\frac{1}{2}(m_2^2+(m_2-n_2)^2+n_2^2)=0$ then $m_2=n_2=0$. If $\frac{1}{2}(m_2^2+(m_2-n_2)^2+n_2^2)=1$, then either $m_2=n_2=\pm1$, or $m_2=0$ and $n_2=\pm1$, or $m_2=\pm1$ and $n_2=0$. So, we have
\begin{align*}
\det\bbD_*=0 &\Leftrightarrow (m_2,n_2)=(0,0),\\
\det\bbD_*=\pm1 &\Leftrightarrow \det\bbD_*=-1\\
&\Leftrightarrow \ell=-1, (m_2,n_2) \in\{(-1,-1),(\pm1,0),(0,\pm1),(1,1)\}.
\end{align*}
Therefore if $(m_2,n_2) \notin\{(0,0),(-1,-1),(\pm1,0),(0,\pm1),(1,1)\}$, then $m_2^2-m_2n_2+n_2^2\ge2$ and $\det\bbD_*\ne0,\pm1$. Furthermore, the eigenvalues $\lambda$ of $\bbD_*$ are all real $3\ell+2$ and $\pm\sqrt{m_2^2-m_2n_2+n_2^2}$, in which $|\lambda|=1$ occurs only when $\ell=-1$ and in this case $\lambda=-1$. All the conditions of Corollary~\ref{T2.3} are satisfied. This proves our claim.

\subsubsection{The Bieberbach group $\mathfrak{G}_6$}
Let $\alpha=(a,A),\ \beta=(b,B)$ and $\gamma=(c,C)$ be elements of $\bbr^3\rtimes\GL(3,\bbz)$ where
\begin{align*}
&a=\left[\begin{matrix}\frac{1}{2}\\0\\0\end{matrix}\right],\
b=\left[\begin{matrix}0\\\frac{1}{2}\\\frac{1}{2}\end{matrix}\right],\
c=\left[\begin{matrix}\frac{1}{2}\\\frac{1}{2}\\\frac{1}{2}\end{matrix}\right],\\
&A=\left[\begin{matrix}1&\hspace{8pt}0&\hspace{8pt}0\\0&-1&\hspace{8pt}0\\0&\hspace{8pt}0&-1\end{matrix}\right],\
B=\left[\begin{matrix}-1&0&\hspace{8pt}0\\\hspace{8pt}0&1&\hspace{8pt}0\\\hspace{8pt}0&0&-1\end{matrix}\right],\
C=\left[\begin{matrix}-1&\hspace{8pt}0&0\\\hspace{8pt}0&-1&0\\\hspace{8pt}0&\hspace{8pt}0&1\end{matrix}\right].
\end{align*}
Then $A,B,C$ have order $2$ and $AB=C=BA$, and
$$
\mathfrak{G}_6=\left\langle{t_1,t_2,t_3,\alpha,\beta,\gamma\ \Big|
\begin{array}{l}
[t_i,t_j]=1, \gamma\beta\alpha=t_1t_3,\\
\alpha^2=t_1,  \alpha t_2\alpha^{-1}=t_2^{-1}, \alpha t_3\alpha^{-1}=t_3^{-1},\\
\beta t_1\beta^{-1}=t_1^{-1}, \beta^2=t_2, \beta t_3\beta^{-1}=t_3^{-1},\\
\gamma t_1\gamma^{-1}=t_1^{-1}, \gamma t_2\gamma^{-1}=t_2^{-1}, \gamma^2=t_3
\end{array}}\right\rangle.
$$
Thus $\mathfrak{G}_6$ fits the short exact sequence
$$
1\lra\Gamma\lra\mathfrak{G}_6\lra\Phi\lra1
$$
where $\Phi=\langle A,B\rangle\cong\bbz_2\oplus\bbz_2$. Every element of $\mathfrak{G}_6$ can be written uniquely in the form $\alpha^k\beta^mt_3^n$. We first observe the following:
Since $\gamma\beta\alpha=t_1t_3$, we have $\gamma=\alpha\beta^{-1}t_3$, and
$$
\beta^m\alpha^k=\begin{cases}
\alpha^k\beta^m&\text{when $(k,m)=(e,e)$}\\
\alpha^{-k}\beta^{m}&\text{when $(k,m)=(e,o)$}\\
\alpha^{k}\beta^{-m}&\text{when $(k,m)=(o,e)$}\\
\alpha^{-k}\beta^{-m}t_3&\text{when $(k,m)=(o,o)$}
\end{cases}
$$
and
\begin{align*}
(\alpha^k\beta^m t_3^n)^2&=\alpha^k(\beta^m\alpha^k)\beta^mt_3^{((-1)^{k+m}+1)n}\\
&=\begin{cases}
\alpha^{2k}\beta^{2m}t_3^{2n}&\text{when $(k,m)=(e,e)$}\\
\beta^{2m}&\text{when $(k,m)=(e,o)$}\\
\alpha^{2k}&\text{when $(k,m)=(o,e)$}\\
t_3^{2n-1}&\text{when $(k,m)=(o,o)$.}
\end{cases}
\end{align*}

Let $\varphi:\mathfrak{G}_6\to\mathfrak{G}_6$ be any homomorphism that induces an isomorphism $\bar\varphi$ on $\Phi$ satisfying  $\gfix(\bar\varphi)=\{I\}$. By Lemma~\ref{Z_2^2}, we have either $\bar\varphi(\bar\alpha)=\bar\beta$, $\bar\varphi(\bar\beta)=\bar\alpha\bar\beta$ or $\bar\varphi(\bar\alpha)=\bar\alpha\bar\beta$, $\bar\varphi(\bar\beta)=\bar\alpha$.

In general, $\varphi$ has the form
$$
\varphi(\alpha)=\alpha^{k_1}\beta^{m_1}t_3^{n_1},\
\varphi(\beta)=\alpha^{k_2}\beta^{m_2}t_3^{n_2},\
\varphi(t_3)=\alpha^{k_3}\beta^{m_3}t_3^{n_3}.
$$
Since $\gamma=\alpha\beta^{-1}t_3$, a simple calculation shows that
\begin{align*}
\varphi(\gamma)&=\alpha^{k_1}\beta^{m_1-m_2}\alpha^{-k_2+k_3}\beta^{m_3}t_3^{(-1)^{k_2+k_3+m_2+m_3}(n_1-n_2)+n_3}.
\end{align*}
\smallskip

\noindent
{\bf Case $\bar\varphi(\bar\alpha)=\bar\beta$, $\bar\varphi(\bar\beta)=\bar\alpha\bar\beta$.}\newline
Then $k_1$ is even and $k_2,m_1,m_2$ are odd. So, we have
\begin{align*}
\varphi(\gamma)
&=\alpha^{k_1-k_2+k_3}\beta^{-m_1+m_2+m_3}t_3^{n_1-n_2+n_3}.
\end{align*}
Since $\alpha^2=t_1,\beta^2=t_2$ and $\gamma^2=t_3$, a simple calculation shows that
\begin{align*}
&\alpha^2=t_1\Rightarrow \varphi(t_1)=\varphi(\alpha)^2=(\alpha^{k_1}\beta^{m_1}t_3^{n_1})^2=\beta^{2m_1}=t_2^{m_1};\\
&\beta^2=t_2\Rightarrow \varphi(t_2)=\varphi(\beta)^2=(\alpha^{k_2}\beta^{m_2}t_3^{n_2})^2=t_3^{2n_2-1};\\
&\gamma^2=t_3\Rightarrow k_3=2(k_1-k_2+k_3),m_3=0,n_3=0.
\end{align*}
Hence $\varphi(t_3)=\alpha^{k_3}=t_1^{k_3/2}$, and it follows that $\varphi(\Gamma)\subset\Gamma$ and so $\bbD=\varphi|_\Gamma$ and
$$
\bbD_*=\left[\begin{matrix}0&0&\frac{k_3}{2}\\m_1&0&0\\0&2n_2-1&0
\end{matrix}\right].
$$
Thus $\det\bbD_*=\frac{k_3}{2}m_1(2n_2-1)\ne0,\pm1$ if and only if either $k_3\ne0$ or $k_3=\pm2,m_1=\pm1,2n_2-1=\pm1$.
If $\bbD_*$ has an eigenvalue of modulus $1$, then $\det\bbD_*=\pm1$. This shows that the condition (2) of Corollary~\ref{T2.3} implies the condition (1). Consequently, when $k_1,k_3,m_3$ are even and $k_2,m_1,m_2$ are odd, if $k_3\ne0$ or
\begin{align*}
(k_3,m_1,n_2)\notin\{&(2,1,0),(2,-1,0),(-2,1,0),(-2,-1,0),\\
&(2,1,1),(2,-1,1),(-2,1,1),(-2,-1,1)\}
\end{align*}
then $\DH(f)>0$.
\smallskip

\noindent
{\bf Case $\bar\varphi(\bar\alpha)=\bar\alpha\bar\beta$, $\bar\varphi(\bar\beta)=\bar\alpha$.}\newline
Then $k_1,k_2,m_1$ are odd and $m_2$ is even. So, we have
\begin{align*}
\varphi(\gamma)
&=\alpha^{k_1+k_2-k_3}\beta^{-m_1+m_2+m_3}t_3^{-n_1+n_2+n_3}.
\end{align*}
Since $\alpha^2=t_1,\beta^2=t_2$ and $\gamma^2=t_3$, a simple calculation shows that
\begin{align*}
&\alpha^2=t_1\Rightarrow \varphi(t_1)=\varphi(\alpha)^2=(\alpha^{k_1}\beta^{m_1}t_3^{n_1})^2=t_3^{2n_1-1};\\
&\beta^2=t_2\Rightarrow \varphi(t_2)=\varphi(\beta)^2=(\alpha^{k_2}\beta^{m_2}t_3^{n_2})^2=\alpha^{2k_2}=t_1^{k_2};\\
&\gamma^2=t_3\Rightarrow k_3=0,m_3=2(-m_1+m_2+m_3),n_3=0.
\end{align*}
Hence $\varphi(t_3)=\beta^{m_3}=t_2^{m_3/2}$, and it follows that $\varphi(\Gamma)\subset\Gamma$ and so $\bbD=\varphi|_\Gamma$ and
$$
\bbD_*=\left[\begin{matrix}0&k_2&0\\0&0&\frac{m_3}{2}\\2n_1-1&0&0
\end{matrix}\right].
$$
Hence $\det\bbD_*=k_2\frac{m_3}{2}(2n_1-1)\ne0,\pm1$ if and only if either $m_3\ne0$ or $k_2=\pm1,m_3=\pm2,2n_1-1=\pm1$.
If $\bbD_*$ has an eigenvalue of modulus $1$, then $\det\bbD_*=\pm1$. This shows that the condition (2) of Corollary~\ref{T2.3} implies the condition (1). Consequently, when $k_1,k_2,m_1$ are odd and $k_3,m_2,m_3$ are even, if $m_3\ne0$ or
\begin{align*}
(k_3,m_1,n_2)\notin\{&(1,2,0),(1,-2,0),(-1,2,0),(-1,-2,0),\\
&(1,2,1),(1,-2,1),(-1,2,1),(-1,-2,1)\}
\end{align*}
then $\DH(f)>0$.

\subsubsection{The Bieberbach group $\mathfrak{B}_3$}
Let $\alpha=(a,A),\epsilon=(b,B)\in\bbr^3\rtimes\GL(3,\bbz)$ where
\begin{align*}
a=\left[\begin{matrix}\frac{1}{2}\\0\\0\end{matrix}\right],\
b=\left[\begin{matrix}0\\\frac{1}{2}\\0\end{matrix}\right],\
A=\left[\begin{matrix}1&\hspace{8pt}0&\hspace{8pt}0\\0&-1&\hspace{8pt}0\\0&\hspace{8pt}0&-1\end{matrix}\right],\
B=\left[\begin{matrix}1&0&\hspace{8pt}0\\0&1&\hspace{8pt}0\\0&0&-1\end{matrix}\right].
\end{align*}
Then $A,B$ have order $2$ and $AB=BA$, and
$$
\mathfrak{B}_3=\left\langle{t_1,t_2,t_3,\alpha,\epsilon\ \Big|
\begin{array}{l}
[t_i,t_j]=1, \epsilon\alpha\epsilon^{-1}=t_2\alpha,\\
\alpha^2=t_1,  \alpha t_2\alpha^{-1}=t_2^{-1}, \alpha t_3\alpha^{-1}=t_3^{-1},\\
\epsilon t_1\epsilon^{-1}=t_1, \epsilon^2=t_2, \epsilon t_3\epsilon^{-1}=t_3^{-1}
\end{array}}\right\rangle.
$$
Thus $\mathfrak{B}_3$ fits the short exact sequence
$$
1\lra\Gamma\lra\mathfrak{B}_3\lra\Phi\lra1
$$
where $\Phi=\langle A,B\rangle\cong\bbz_2\oplus\bbz_2$. Every element of $\mathfrak{B}_3$ can be written uniquely in the form $\alpha^k\epsilon^mt_3^n$. We first observe the following:
From $\epsilon\alpha\epsilon^{-1}=t_2\alpha$, we have
\begin{align*}
&\epsilon^m\alpha^k=\alpha^k\epsilon^{(-1)^km},\\
&(\alpha^k\epsilon^mt_3^n)^2=\alpha^{2k}\epsilon^{((-1)^k+1)m}t_3^{((-1)^{k+m}+1)n}.
\end{align*}

Let $\varphi:\mathfrak{B}_3\to\mathfrak{B}_3$ be any homomorphism. Then $\varphi$ has the form
$$
\varphi(\alpha)=\alpha^{k_1}\epsilon^{m_1}t_3^{n_1},\
\varphi(\epsilon)=\alpha^{k_2}\epsilon^{m_2}t_3^{n_2},\
\varphi(t_3)=\alpha^{k_3}\epsilon^{m_3}t_3^{n_3}.
$$
Assume that
$\varphi$ induces an isomorphism $\bar\varphi:\Phi\to\Phi$ so that $\gfix(\bar\varphi)=\{I\}$. Write $\bar\alpha, \bar\epsilon$ for the images of $\alpha,\epsilon$ under $\mathfrak{B}_3\to\Phi$. Then
$$
\bar\varphi(\bar\alpha)=\bar\alpha^{k_1}\bar\epsilon^{m_1},\
\bar\varphi(\bar\epsilon)=\bar\alpha^{k_2}\bar\epsilon^{m_2},
$$
and {$k_3,m_3$ must be even}, i.e., $\varphi(t_3)\in\Gamma$. Due to $\gfix(\bar\varphi)=\{I\}$, $\bar\varphi$ must be one of the forms given in Lemma~\ref{Z_2^2}.
\medskip

\noindent
{\bf Case $\bar\varphi(\bar\alpha)=\bar\epsilon$, $\bar\varphi(\bar\epsilon)=\bar\alpha\bar\epsilon$.}\newline
Then $k_1$ is even and $k_2,m_1,m_2$ are odd. So, we have
\begin{align*}
&\varphi(t_1)=\varphi(\alpha)^2=t_1^{k_1}t_2^{m_1};\\
&\varphi(t_2)=\varphi(\epsilon)^2=t_1^{k_2}t_3^{2n_2};\\
&\varphi(t_3)=\alpha^{k_3}\epsilon^{m_3}t_3^{n_3}=t_1^{k_3/2}t_2^{m_3/2}t_3^{n_3}.
\end{align*}
Hence it follows that $\varphi(\Gamma)\subset\Gamma$ and so $\bbD=\varphi|_\Gamma$ and
$$
\bbD_*=\left[\begin{matrix}k_1&k_2&\frac{k_3}{2}\\m_1&0&\frac{m_3}{2}\\0&2n_2&n_3
\end{matrix}\right].
$$
On the other hand,
\begin{align*}
&\epsilon\alpha\epsilon^{-1}=t_2\alpha\Rightarrow \alpha\epsilon^{-1}=\epsilon\alpha\Rightarrow k_2=m_1=0,\\
&\alpha t_2\alpha^{-1}=t_2^{-1}\Rightarrow k_2=0,\\
&\alpha t_3\alpha^{-1}=t_3^{-1}\Rightarrow k_3=m_3=0.
\end{align*}
These induce that $\det\bbD_*=0$.
\smallskip

\noindent
{\bf Case $\bar\varphi(\bar\alpha)=\bar\sigma\bar\epsilon$, $\bar\varphi(\bar\epsilon)=\bar\alpha$.}\newline
Then $k_1,k_2,m_1$ are odd and $m_2$ is even. So, we have
\begin{align*}
&\epsilon\alpha\epsilon^{-1}=t_2\alpha\Rightarrow \alpha\epsilon^{-1}=\epsilon\alpha\Rightarrow k_2=0,m_1=m_2,n_1=0.
\end{align*}
and hence
\begin{align*}
&\varphi(t_2)=\varphi(\epsilon)^2=t_1^{k_2}=1.
\end{align*}
This induces that $\det\bbD_*=0$.
\smallskip

In all, Corollary~\ref{T2.3} is not applicable for the flat manifold $\mathfrak{B}_3\bs\bbr^3$.

\subsubsection{The Bieberbach group $\mathfrak{B}_4$}
Let $\alpha=(a,A),\epsilon=(b,B)\in\bbr^3\rtimes\GL(3,\bbz)$ where
\begin{align*}
a=\left[\begin{matrix}\frac{1}{2}\\0\\0\end{matrix}\right],\
b=\left[\begin{matrix}0\\\frac{1}{2}\\\frac{1}{2}\end{matrix}\right],\
A=\left[\begin{matrix}1&\hspace{8pt}0&\hspace{8pt}0\\0&-1&\hspace{8pt}0\\0&\hspace{8pt}0&-1\end{matrix}\right],\
B=\left[\begin{matrix}1&0&\hspace{8pt}0\\0&1&\hspace{8pt}0\\0&0&-1\end{matrix}\right].
\end{align*}
Then $A,B$ have order $2$ and $AB=BA$, and
$$
\mathfrak{B}_4=\left\langle{t_1,t_2,t_3,\alpha,\epsilon\ \Big|
\begin{array}{l}
[t_i,t_j]=1, \epsilon\alpha\epsilon^{-1}=t_2t_3\alpha,\\
\alpha^2=t_1,  \alpha t_2\alpha^{-1}=t_2^{-1}, \alpha t_3\alpha^{-1}=t_3^{-1},\\
\epsilon t_1\epsilon^{-1}=t_1, \epsilon^2=t_2, \epsilon t_3\epsilon^{-1}=t_3^{-1}
\end{array}}\right\rangle.
$$
Thus $\mathfrak{B}_4$ fits the short exact sequence
$$
1\lra\Gamma\lra\mathfrak{B}_4\lra\Phi\lra1
$$
where $\Phi=\langle A,B\rangle\cong\bbz_2\oplus\bbz_2$. Every element of $\mathfrak{B}_4$ can be written uniquely in the form $\alpha^k\epsilon^mt_3^n$. We first observe the following:
From $\epsilon\alpha\epsilon^{-1}=t_2t_3\alpha$, we have
\begin{align*}
&\epsilon^m\alpha^k=\alpha^k\epsilon^{(-1)^km}t_3^{\frac{1-(-1)^k}{2}\frac{1-(-1)^{m}}{2}},\\
&(\alpha^k\epsilon^mt_3^n)^2=\alpha^{2k}\epsilon^{((-1)^k+1)m}t_3^{((-1)^{k+m}+1)n-\frac{1-(-1)^k}{2}\frac{1-(-1)^{m}}{2}}.
\end{align*}

Let $\varphi:\mathfrak{B}_4\to\mathfrak{B}_4$ be any homomorphism. Then $\varphi$ has the form
$$
\varphi(\alpha)=\alpha^{k_1}\epsilon^{m_1}t_3^{n_1},\
\varphi(\epsilon)=\alpha^{k_2}\epsilon^{m_2}t_3^{n_2},\
\varphi(t_3)=\alpha^{k_3}\epsilon^{m_3}t_3^{n_3}.
$$
Assume that
$\varphi$ induces an isomorphism $\bar\varphi:\Phi\to\Phi$ so that $\gfix(\bar\varphi)=\{I\}$. Write $\bar\alpha, \bar\epsilon$ for the images of $\alpha,\epsilon$ under $\mathfrak{B}_3\to\Phi$. Then
$$
\bar\varphi(\bar\alpha)=\bar\alpha^{k_1}\bar\epsilon^{m_1},\
\bar\varphi(\bar\epsilon)=\bar\alpha^{k_2}\bar\epsilon^{m_2},
$$
and {$k_3,m_3$ must be even}, i.e., $\varphi(t_3)\in\Gamma$. Due to $\gfix(\bar\varphi)=\{I\}$, $\bar\varphi$ must be one of the forms given in Lemma~\ref{Z_2^2}.
\medskip

\noindent
{\bf Case $\bar\varphi(\bar\alpha)=\bar\epsilon$, $\bar\varphi(\bar\epsilon)=\bar\alpha\bar\epsilon$.}\newline
Then $k_1$ is even and $k_2,m_1,m_2$ are odd. So, we have
\begin{align*}
&\epsilon\alpha\epsilon^{-1}=t_2t_3\alpha\Rightarrow \alpha\epsilon^{-1}=\epsilon t_3\alpha\Rightarrow k_3=-2k_2, m_3=2m_1, n_3=0,\\
&\alpha t_3\alpha^{-1}=t_3^{-1}\Rightarrow k_3=m_3=0.
\end{align*}
Hence $\varphi(t_3)=1$. This induces $\det\bbD_*=0$.
\smallskip

\noindent
{\bf Case $\bar\varphi(\bar\alpha)=\bar\sigma\bar\epsilon$, $\bar\varphi(\bar\epsilon)=\bar\alpha$.}\newline
Then $k_1,k_2,m_1$ are odd and $m_2$ is even. So, we have $\alpha t_3\alpha^{-1}=t_3^{-1}\Rightarrow n_3=0$, but
$\epsilon\alpha\epsilon^{-1}=t_2t_3\alpha$ induces that $n_3=1-2n_1$ is odd, a contradiction.
\medskip

In all, Corollary~\ref{T2.3} is not applicable for the flat manifold $\mathfrak{B}_3\bs\bbr^3$.

\subsection{Infra-nilmanifolds modeled on $\Nil$}

We will consider the three-dimensional infra-nilmanifolds modeled on the Heisenberg group $\Nil$. Recall that
$$
\Nil=\left\{\left[\begin{matrix}1&x&z\\0&1&y\\0&0&1\end{matrix}\right]\ \Big|\ x,y,z\in\bbr\right\}.
$$
For all integers $k>0$, we consider the subgroups $\Gamma_k$ of $\Nil$:
$$
\Gamma_k=\left\{\left[\begin{matrix}1&m&-\frac{\ell}{k}\\0&1&\hspace{8pt}n\\0&0&\hspace{8pt}1\end{matrix}\right]\ \Big|\ \ell,m,n\in\bbz\right\}.
$$
These are lattices of $\Nil$ and every lattice of $\Nil$ is isomorphic to some $\Gamma_k$. Letting
$$
s_1=\left[\begin{matrix}1&1&0\\0&1&0\\0&0&1\end{matrix}\right],\
s_2=\left[\begin{matrix}1&0&0\\0&1&1\\0&0&1\end{matrix}\right],\
s_3=\left[\begin{matrix}1&0&-\frac{1}{k}\\0&1&\hspace{8pt}0\\0&0&\hspace{8pt}1\end{matrix}\right],
$$
we obtain a presentation of $\Gamma_k$
$$
\Gamma_k=\langle s_1,s_2,s_3\mid [s_3,s_1]=[s_3,s_2]=1, [s_2,s_1]=s_3^k\rangle.
$$
Every element of $\Gamma_k$ can be written uniquely as the form
$$
s_2^ns_1^ms_3^\ell=\left[\begin{matrix}1&m&-\frac{\ell}{k}\\0&1&\hspace{8pt}n\\0&0&\hspace{8pt}1\end{matrix}\right].
$$
Remark that $s_1^ms_2^n=s_2^ns_1^ms_3^{-kmn}$.
All possible almost-Bieberbach groups can be found in \cite[pp.~\!799--801]{Shin} or \cite{DIKL}. We will use the same names for our groups as in \cite{DIKL}.

The $3$-dimensional almost Bieberbach groups of type II, IV, X and XVI have holonomy groups $\bbz_2$ or $\bbz_6$, and so by Lemma~\ref{nontrivial fix} we cannot apply Corollary~\ref{T2.3}.

\subsubsection{Groups of type {\rm I}, $k>0$}
Let $f$ be a self-map on the nilmanifold $\Gamma_k\bs\Nil$. It can be seen easily, see for example \cite[p.~\!355]{LZ-china}, that all the possible (integer) matrices $\bbD_*$ are of the form
$$
\bbD_*=\left[\begin{matrix}
\alpha\delta-\beta\gamma&0&0\\
0&\alpha&\beta\\0&\gamma&\delta
\end{matrix}\right].
$$
Hence the eigenvalues of $\bbD_*$ are $\lambda_1,\lambda_2$ and $\lambda_1\lambda_2$ where
$$
\lambda_i=\frac{(\alpha+\delta)\pm\sqrt{(\alpha-\delta)^2+4\beta\gamma}}{2}
=\frac{(\lambda_1+\lambda_2)\pm|\lambda_1-\lambda_2|}{2}.
$$
Since $\det\bbD_*=\lambda_1^2\lambda_2^2$, we have $\det\bbD_*=0,\pm1 \Leftrightarrow \lambda_1\lambda_2=0,\pm1$. By the condition (2) of Corollary~\ref{lattice}, we must have $|\lambda_1\lambda_2|\ge2$. Note that the eigenvalues are all real. By the condition (1) of Corollary~\ref{lattice}, if $|\lambda|=1$ then $\lambda\ne1$ and $\lambda$ is a root of unity; hence $\lambda=-1$. In this case Corollary~\ref{lattice} implies $\DH(f)>0$. If we let $\ell=\lambda_1+\lambda_2=-\lambda_j$, then $\lambda_1+\lambda_2=-1-\ell$. This is exactly a part of the case~(4) of \cite[Theorem~5.5]{LZ-china}, which yields $\HPer(f)=\bbn-2\bbn$ and so $\DH(f)=1/2$. If the condition (1) of Corollary~\ref{lattice} is vacuously true, i.e., if $|\lambda_i|\ne1$ for $i=1,2$, then by Corollary~\ref{lattice} we have $\DH(f)>0$. Note that this is exactly the case~(5) of \cite[Theorem~5.5]{LZ-china}, which yields $\HPer(f)=\bbn$ and so $\DH(f)=1$.

\subsubsection{Groups of type {\rm VIII}, $k>0,k\equiv0\pmod4$}
Consider another almost Bieberbach group $\Pi$ given by
$$
\Pi=\left\langle{s_1, s_2, s_3, \alpha,\beta\ \Big|
\begin{array}{l}
[s_3,s_1]=[s_3,s_2]=1,\ [s_2,s_1]=s_3^k,\\
\alpha s_1\alpha^{-1}=s_1^{-1}s_3^{k/2},\ \alpha s_2\alpha^{-1}=s_2^{-1}s_3^{-k/2},\ \alpha^2=s_3,\\
\beta^2=s_1, \beta s_2\beta^{-1}=s_2^{-1}s_3^{k/2},\ \beta s_3\beta^{-1}=s_3^{-1},\\
\text{$[\alpha,\beta]=s_1^{-1}s_2^{-1}s_3^{k/2+1}$}
\end{array}}\right\rangle.
$$
This is a $3$-dimensional almost Bieberbach group $\pi_{4}$ with Seifert bundle type $4$, and has the holonomy group $\Phi=\bbz_2\oplus\bbz_2$ with generators $\bar\alpha$ and $\bar\beta$.

Let $\varphi:\Pi\to\Pi$ be a homomorphism that induces an isomorphism $\bar\varphi:\Phi\to\Phi$ satisfying $\gfix(\bar\varphi)=\{I\}$. Every element of $\Pi$ can be written uniquely as $s_2^n\beta^m\alpha^\ell$. By Lemma~\ref{Z_2^2}, we need to consider the following two cases.
\medskip

\noindent
{\bf Case $\bar\varphi(\bar\alpha)=\bar\beta$, $\bar\varphi(\bar\beta)=\bar\alpha\bar\beta$.}\newline
Then $\varphi$ has the form
$$
\varphi(s_2)=s_2^{n_2}s_1^{m_2}s_3^{\ell_2},\
\varphi(\beta)=s_2^{n_1}\beta^{2m_1+1}\alpha^{2\ell_1+1},\
\varphi(\alpha)=s_2^{n_3}s_1^{m_3}\alpha^{2\ell_3+1}.
$$
So, we have
\begin{align*}
\varphi(s_1)&=\varphi(\beta)^2=s_2^{2n_1+1}s_3^{-\frac{km_1}{2}(2n_1+1)},\\
\varphi(s_3)&=\varphi(\alpha)^2=(s_2^{n_3}s_1^{m_3}\alpha^{2\ell_3+1})^2
=s_3^{\frac{k}{2}(2m_3n_3+m_3-n_3)+2\ell_3+1}.
\end{align*}
On the other hand, a simple computation shows that the relation $\beta s_2\beta^{-1}=s_2^{-1}s_3^{k/2}$ induces
\begin{align*}
&n_2=0,\ \frac{k}{2}(2m_3n_3+m_3-n_3)+2\ell_3+1=-m_2(2n_1+1)
\end{align*}
These describe $\bbD=\mu(d)D=\varphi|_\Gamma$. It is now easy to see that the ``integral" differential of $\bbD$ with respect to the basis $\{\log(s_1),\log(s_2),\log(s_3)\}$ of $\mathfrak{nil}$ is
$$
\bbD_*=\left[\begin{matrix}
0&m_2&0\\2n_1+1&n_2&0\\0&0&-m_2(2n_1+1)
\end{matrix}\right].
$$

Remark that the eigenvalues $\lambda$ of $\bbD_*$ are $-m_2(2n_1+1)$ and the roots of the equation $x^2-m_2(2n_1+1)=0$. Thus $\lambda=-m_2(2n_1+1)$ or $\pm\sqrt{m_2(2n_1+1)}$.
Hence if $|\lambda|=1$ then one of the eigenvalues must be $1$. Consequently, the condition (1) of Corollary~\ref{T2.3} holds if and only if $m_2(2n_1+1)\ne\pm1$.  Note also that since $\det\bbD_*=m_2^2(2n_1+1)^2$, $\det\bbD_*\ne0,\pm1$ if and only if $m_2(2n_1+1)\ne0, \pm1$.

In all, we have shown that $\DH(f)>0$ if $m_2(2n_1+1)\ne0,\pm1$, or equivalently if
$$
(m_2,n_1)\notin\{(0,\bbz),(1,0),(-1,0),(1,-1),(-1,-1)\}.
$$
\smallskip

\noindent
{\bf Case $\bar\varphi(\bar\alpha)=\bar\alpha\bar\beta$, $\bar\varphi(\bar\beta)=\bar\alpha$.}\newline
Then $\varphi$ has the form
$$
\varphi(s_2)=s_2^{n_2}s_1^{m_2}s_3^{\ell_2},\
\varphi(\beta)=s_2^{n_1}s_1^{m_1}\alpha^{2\ell_1+1},\
\varphi(\alpha)=s_2^{n_3}\beta^{2m_3+1}\alpha^{2\ell_3+1}.
$$
So, we have
\begin{align*}
\varphi(s_1)&=\varphi(\beta)^2=s_3^{\frac{k}{2}(2m_1n_1+m_1-n_1)+2\ell_1+1}:=s_3^\ell,\\
\varphi(s_3)&=\varphi(\alpha)^2=s_2^{2n_3+1}s_3^{-\frac{km_3}{2}(2n_3+1)}.
\end{align*}
On the other hand, a simple computation shows that
\begin{align*}
&\alpha s_2\alpha^{-1}=s_2^{-1}s_3^{-k/2}\Rightarrow n_2=0,\ \ell=m_2(2n_3+1)
\end{align*}
These describe $\bbD=\mu(d)D=\varphi|_\Gamma$. It is now easy to see that the ``integral" differential of $\bbD$ with respect to the basis $\{\log(s_1),\log(s_2),\log(s_3)\}$ of $\mathfrak{nil}$ is
$$
\bbD_*=\left[\begin{matrix}
0&m_2&0\\0&n_2&2n_3+1\\m_2(2n_3+1)&0&0
\end{matrix}\right].
$$

Remark that the characteristic equation of $\bbD_*$ is $x^3-m_2^2(2n_3+1)^2=0$. Thus if an eigenvalue of $\bbD_*$ has modulus $1$, then one of the eigenvalues must be $1$. Consequently, the condition (1) of Corollary~\ref{T2.3} holds if and only if $m_2(2n_3+1)\ne\pm1$.  Note also that since $\det\bbD_*=m_2^2(2n_3+1)^2$, $\det\bbD_*\ne0,\pm1$ if and only if $m_2(2n_3+1)\ne0, \pm1$.

In all, we have shown that $\DH(f)>0$ if $m_2(2n_3+1)\ne0,\pm1$, or equivalently if
$$
(m_2,n_3)\notin\{(0,\bbz),(1,0),(-1,0),(1,-1),(-1,-1)\}.
$$

\subsubsection{Groups of type {\rm XIII}, $k>0,k\equiv0 \text{ or }2\pmod3$}
Consider the almost Bieberbach group $\Pi$ given by
$$
\Pi=\left\langle{s_1, s_2, s_3, \alpha \ \Big|
\begin{array}{l}
[s_3,s_1]=[s_3,s_2]=1,\ [s_2,s_1]=s_3^k,\\ \alpha s_1\alpha^{-1}=s_2,\
\alpha s_2\alpha^{-1}=s_1^{-1}s_2^{-1},\ \alpha^3=s_3
\end{array}}\right\rangle.
$$
This is a $3$-dimensional almost Bieberbach group $\pi_{6,1}$ or $\pi_{6,4}$ with Seifert bundle type $6$.

Let $\varphi:\Pi\to\Pi$ be a homomorphism. Every element of $\Pi$ is of the form $s_2^ns_1^ms_3^\ell$, $s_2^ns_1^ms_3^\ell\alpha$ or $s_2^ns_1^ms_3^\ell\alpha^2$. In order to have an isomorphism $\bar\varphi:\Phi\to\Phi$ such that $\gfix(\varphi)=\{I\}$, we must have that $\bar\varphi(\bar\alpha)=\bar\alpha^2$. This implies that $\varphi$ has the form
$$
\varphi(s_1)=s_2^{n_1}s_1^{m_1}s_3^{\ell_1},\
\varphi(s_2)=s_2^{n_2}s_1^{m_2}s_3^{\ell_2},\
\varphi(\alpha)=s_2^{n_3}s_1^{m_3}\alpha^{3\ell_3+2}.
$$
Then we can show that
$$
\varphi(s_3)=\varphi(\alpha)^3=s_3^{(3\ell_3+2)-\frac{m_3(m_3+1)}{2}k+(m_3^2+m_3n_3+n_3^2)k-\frac{n_3(n_3+1)}{2}k}
:=s_3^m.
$$
Furthermore, the relations $\alpha s_1\alpha^{-1}=s_2$ and $\alpha s_2\alpha^{-1}=s_1^{-1}s_2^{-1}$ are preserved by $\varphi$. This induces the conditions $n_1=n_2=-m_1$ and $m_2=-2m_1$. The relation $[s_2,s_1]=s_3^k$ yields that $m=-3m_1^2$. Consequently, the integral differential of $\bbD=\varphi|_{\Gamma_k}$ with respect to the basis $\{\log(s_1),\log(s_2),\log(s_3)\}$ of $\mathfrak{nil}$ is
$$
\bbD_*=\left[\begin{matrix}
\hspace{8pt}m_1&-2m_1&0\\-m_1&\hspace{4pt}-m_1&0\\\hspace{8pt}0&\hspace{8pt}0&-3m_1^2
\end{matrix}\right].
$$
Hence $\det\bbD_*=(3m_1^2)^2$ and the eigenvalues of $\bbD_*$ are $-3m_1^2$ and $\pm\sqrt{3}m_1$. No eigenvalues of $\bbD_*$ are of modulus $1$, and $\det\bbD_*=0$ (i.e., $m_1=0$) or $\det\bbD_*\ge9$. Consequently if $m_1\ne0$ then $\DH(f)>0$.

\subsubsection{Groups of type {\rm XIII}, $k>0,k\equiv0\text{ or }1\pmod3$}
Consider another almost Bieberbach group $\Pi$ given by
$$
\Pi=\left\langle{s_1, s_2, s_3, \alpha \ \Big|
\begin{array}{l}
[s_3,s_1]=[s_3,s_2]=1,\ [s_2,s_1]=s_3^k,\\ \alpha s_1\alpha^{-1}=s_2,\
\alpha s_2\alpha^{-1}=s_1^{-1}s_2^{-1},\ \alpha^3=s_3^2
\end{array}}\right\rangle.
$$
This is a $3$-dimensional almost Bieberbach group $\pi_{6,2}$ or $\pi_{6,3}$ with Seifert bundle type $6$.

Let $\varphi:\Pi\to\Pi$ be a homomorphism. Every element of $\Pi$ is of the form $s_2^ns_1^ms_3^\ell$, $s_2^ns_1^ms_3^\ell\alpha$ or $s_2^ns_1^ms_3^\ell\alpha^2$. In order to have an isomorphism $\bar\varphi:\Phi\to\Phi$ such that $\gfix(\varphi)=\{I\}$, we must have that $\bar\varphi(\bar\alpha)=\bar\alpha^2$. This implies that $\varphi$ has the form
$$
\varphi(s_1)=s_2^{n_1}s_1^{m_1}s_3^{\ell_1},\
\varphi(s_2)=s_2^{n_2}s_1^{m_2}s_3^{\ell_2},\
\varphi(\alpha)=s_2^{n_3}s_1^{m_3}\alpha^{3\ell_3+2}.
$$
Then it can be seen as before that
$$
\varphi(s_3^2)=\varphi(\alpha)^3=s_3^{(3\ell_3+2)-\frac{m_3(m_3+1)}{2}k+(m_3^2+m_3n_3+n_3^2)k-\frac{n_3(n_3+1)}{2}k}.
$$
Since $\varphi(s_3)\in\Gamma_k$, $\varphi(s_3)$ is of the form $s_2^ns_1^ms_3^\ell$ and so
$$
\varphi(s_3^2)=(s_2^ns_1^ms_3^\ell)^2=s_2^{2n}s_1^{2m}s_3^{2\ell-kmn}.
$$
Hence $\varphi(s_3)=s_3^\ell$. Furthermore, the relations $\alpha s_1\alpha^{-1}=s_2$ and $\alpha s_2\alpha^{-1}=s_1^{-1}s_2^{-1}$ are preserved by $\varphi$. This induces the conditions $n_1=n_2=-m_2$ and $m_1=-2m_2$. The relation $[s_2,s_1]=s_3^k$ yields that $\ell=m_1n_2-m_2n_1=3m_2^2$. Consequently, the integral differential of $\bbD=\varphi|_{\Gamma_k}$ with respect to the basis $\{\log(s_1),\log(s_2),\log(s_3)\}$ of $\mathfrak{nil}$ is
$$
\bbD_*=\left[\begin{matrix}
-2m_2&\hspace{8pt}m_2&0\\\hspace{4pt}-m_2&-m_2&0\\\hspace{8pt}0&\hspace{8pt}0&3m_2^2
\end{matrix}\right].
$$
Hence $\det\bbD_*=(3m_2^2)^2$ and the eigenvalues of $\bbD_*$ are $\ell$ and $\frac{-3\pm\sqrt{3}i}{2}m_2$. No eigenvalues of $\bbD_*$ are of modulus $1$, and $\det\bbD_*=0$ (i.e., $m_2=0$) or $\det\bbD_*\ge9$. Consequently if $m_2\ne0$ then $\DH(f)>0$.

\subsection{Infra-solvmanifolds modeled on $\Sol$}

Next we will consider a closed $3$-manifold with $\Sol$-geometry. Recall that $\Sol=\bbr^2\rtimes_\phi\bbr$ where
$$
\phi(t)=\left[\begin{matrix}e^t&0\\0&e^{-t}\end{matrix}\right].
$$
Then $\Sol$ is a connected and simply connected unimodular $2$-step solvable Lie group of type $\R$.
It has a faithful representation into $\aff(\bbr^3)$ as follows:
$$
\Sol=\left\{\left[\begin{matrix}e^t&0&0&x\\0&e^{-t}&0&y\\0&0&1&t\\0&0&0&1\end{matrix}\right]\Big|\ x,y,t\in\bbr\right\}.
$$

Let $M$ be a closed $3$-manifold with $\Sol$-geometry. Then the fundamental group $\Pi$ of $M$ is a Bieberbach group of $\Sol$, and $M=\Pi\bs\Sol$. Further, $\Pi$ can be embedded into $\aff(\Sol)=\Sol\rtimes\aut(\Sol)$ so that there is an exact sequence
$$
1\lra\Gamma\lra\Pi\lra\Pi/\Gamma\lra1
$$
where $\Gamma=\Pi\cap\Sol$ is a lattice of $\Sol$ and $\Phi=\Pi/\Gamma$ is a finite group, called the holonomy group of $\Pi$ or $M$, which sits naturally into $\aut(\Sol)$, see \cite{HL13}. The lattices $\Gamma$ of $\Sol$ are determined by $2\x2$-integer matrices $A$
$$
A=\left[\begin{matrix}\ell_{11}&\ell_{12}\\\ell_{21}&\ell_{22}\end{matrix}\right]
$$
of determinant $1$ and trace $>2$, see for example \cite[Lemma~2.1]{LZ}. Namely,
$$
\Gamma=\GammaA=\langle a_1,a_2,\tau \mid[a_1,a_2]=1, \tau a_i\tau^{-1}=A(a_i)\rangle=\bbz^2\rtimes_A\bbz.
$$
It is known from \cite[Corollary~8.3]{HL13} that $\Pi$ is isomorphic to one of the following groups, adopting the naming in \cite{HL13-p} and using the notation $\bfa^\bfx$ for the elements in $\Gamma$ of the form $a_1^{x}a_2^{y}$:
\begin{enumerate}
\item[$(1)$]
$\Pi_1=\GammaA=\left\langle{ a_1,a_2,\tau \mid [a_1,a_2]=1, \tau a_i\tau^{-1}=A(a_i)}\right\rangle$.
\item[$(2)$]
$\Pi_2^\pm=\left\langle{a_1,a_2,\sigma \mid [a_1,a_2]=1, \sigma a_i\sigma^{-1}=N_\pm(a_i)}\right\rangle$\newline
where $N_\pm$ are square roots of $A$. The holonomy group is $\bbz_2$.
\item[$(3)$]
$\Pi_3=\left\langle{ a_1, a_2, \tau, \alpha \ \Big|
\begin{array}{l}
[a_1,a_2]=1,\ \tau a_i\tau^{-1}=A(a_i),\\
\alpha a_i\alpha^{-1}=M(a_i),\\
\alpha^2=\bfa^{\bfm},\ \alpha\tau\alpha^{-1}=\bfa^{\bfk'}\tau^{-1}
\end{array}
}\right\rangle$\newline
for some integer matrix $M$. The holonomy group is $\bbz_2$.
\item[$(4)$]
$\Pi_6=\left\langle{ a_1, a_2, \sigma, \alpha \ \Big|
\begin{array}{l}
[a_1,a_2]=1,\ \sigma a_i\sigma^{-1}=N(a_i),\\ \alpha a_i\alpha^{-1}=M(a_i),\\
\alpha^2=\bfa^{\bfm},\ \alpha \sigma\alpha^{-1}=\bfa^{\bfk'}\sigma^{-1}
\end{array}}\right\rangle$\newline
where $N$ is a square matrix of $A$ and $M$ is some integer matrix. The holonomy group is $\bbz_2\oplus\bbz_2$.
\end{enumerate}

Due to Lemma~\ref{nontrivial fix}, the manifolds with fundamental group $\Pi_2^\pm$ or $\Pi_3$ are not our concern and we will focus on the special solvmanifold $\GammaA\bs\Sol$ and the manifold with fundamental group $\Pi_6$.

\subsubsection{The special solvmanifold $\GammaA\bs\Sol$}

Let $f$ be a self-map on $\GammaA\bs\Sol$. By \cite[Theorem 2.4]{LZ}, the homomorphism $\varphi:\GammaA\to\GammaA$ induced by $f$ is determined by
$$
\varphi(a_i)=\bfa^{\bfu_i},\ \varphi(\tau)=\bfa^\bfp\tau^\zeta
$$
for some $\bfu_i,\bfp\in\bbz^2$ and $\zeta\in\bbz$. Note that $\varphi$ extends uniquely to a Lie group homomorphism on $\Sol$. It follows easily that all the possible (integer) matrices $\bbD_*$ are of the form
$$
\bbD_*=\left[\begin{matrix}\bfu_1&\bfu_2&{\bf0}\\
0&0&\zeta\end{matrix}\right].
$$
We say that $\varphi$ is of type (I) if $\zeta=1$; of type (II) if $\zeta=-1$; of type (III) if $\zeta\ne\pm1$. When $\varphi$ is of type (III), we have $\varphi(a_i)=1$.

Now we consider the conditions of Corollary~\ref{lattice}. These eliminate $\varphi$ of type (I) and (III). If $\varphi$ of type (II) satisfies the conditions of Corollary~\ref{lattice}, then $\DH(f)>0$. In fact, it is shown in \cite[Theorem~5.1]{LZ-AGT} that such a map has $\HPer(f)=\bbn-2\bbn$, and so $\DH(f)=1/2$.

\subsubsection{The infra-solvmanifold $\Pi_6\bs\Sol$}

Consider the Bieberbach groups given by
$$
\Pi_6=\left\langle{ a_1, a_2, \sigma, \alpha \ \Big|
\begin{array}{l}
[a_1,a_2]=1,\ \sigma a_i\sigma^{-1}=N(a_i),\\ \alpha a_i\alpha^{-1}=M(a_i),\\
\alpha^2=\bfa^{\bfe_2},\ \alpha \sigma\alpha^{-1}=\bfa^{\bfk'}\sigma^{-1}
\end{array}}\right\rangle.
$$
Let $\varphi:\Pi_6\to\Pi_6$ be a homomorphism and let $\tau=\sigma^2$. Then the subgroup $\langle a_1,a_2,\tau\rangle$ of $\Pi_6$ is a lattice of $\Sol$ and is a fully invariant subgroup of $\Pi_6$. Thus every element of $\Pi_6$ is of the form $\bfa^\bfx\tau^r$, $\bfa^\bfx\tau^r\sigma$, $\bfa^\bfx\tau^r\alpha$ or $\bfa^\bfx\tau^r\sigma\alpha$, and
$$
\varphi(a_i)=\bfa^{\bfu_i},\ \varphi(\tau)=\bfa^\bfp\tau^\zeta.
$$
Let $(d,D)$ be an affine map on $\Sol$ satisfying the identity \eqref{homo}. Then $\bbD=\mu(d)D=\varphi|_{\langle a_1,a_2,\tau\rangle}$ and so
$$
\bbD_*=\left[\begin{matrix}\bfu_1&\bfu_2&0\\0&0&\zeta\end{matrix}\right].
$$

Since $\Phi_6\cong\bbz_2\oplus\bbz_2$ is generated by $\bar\sigma$ and $\bar\alpha$, $\bar\varphi$ must be one of the forms given in Lemma~\ref{Z_2^2}. Thus we have $\bar\varphi(\bar\sigma)=\bar\alpha$ or $\bar\varphi(\bar\sigma)=\bar\sigma\bar\alpha$. This implies that $\varphi(\sigma)$ is of the form $\bfa^\bfx\tau^r\alpha$ or $\bfa^\bfx\tau^r\sigma\alpha$. The identity $\sigma^2=\tau$ induces in either case $\zeta=0$ and consequently $\det\bbD_*=0$. Hence we cannot apply Corollary~\ref{T2.3}.


\begin{thebibliography}{99}
\bibitem{ABLSS}
L. Alsed\`{a}, S. Baldwin, J. Llibre, R. Swanson and W. Szlenk,
Minimal sets of periods for torus maps via Nielsen numbers,
Pacific J. Math., {\bf 169}  (1995),  1--32.

\bibitem{BGMY}
L. Block, J. Guckenheimer, M. Misiurewicz and L. S. Young,
Periodic points and topological entropy of one-dimensional maps,
Global theory of dynamical systems (Proc. Internat. Conf., Northwestern Univ., Evanston, Ill., 1979),  pp.\!~18--34, Lecture Notes in Math., {\bf 819}, Springer, Berlin, 1980.

\bibitem{Shin}
D. Choi and J. Shin,
Free actions of finite Abelian groups on $3$-dimensional nilmanifolds,
J. Korean Math. Soc., {\bf 42} (2005), 795-–826.

\bibitem{Dekimpe}
 K. Dekimpe,
 Almost-Bieberbach Groups: Affine and Polynomial Structures,
 Lecture Notes in Mathematics, {\bf 1639}, Springer-Verlag, Berlin, 1996.

\bibitem{DIKL}
K. Dekimpe, P. Igodt, S. Kim and K. B. Lee,
Affine structures for closed $3$-dimensional manifolds with $\Nil$-geometry,
Quart. J. Math Oxford. (2), {\bf 46}, 141–-167.

\bibitem{FL}
A. Fel'shtyn and J. B. Lee,
   The Nielsen and Reidemeister numbers of maps on infra-solvmanifolds of type $\R$,
   to appear in Topology Appl. (arXiv:1303.0784).

\bibitem{HL13}
K. Y. Ha and J. B. Lee,
Crystallographic groups of $\Sol$,
Math. Nachr., {\bf 286} (2013), 1614--1667.

\bibitem{HL13-b}
K. Y. Ha and J. B. Lee,
Averaging formulas of the Lefschetz, Nielsen and Reidemeister numbers of maps between infra-solvmanifolds,
preprint.

\bibitem{HL13-p}
K. Y. Ha and J. B. Lee,
    The $R_\infty$ property for crystallographic groups of $\Sol$, preprint.


\bibitem{Zhao et al}
J. W. Hoffman, Z. Liang, Y. Sakai and X. Zhao,
Homotopy minimal period self-maps on flat manifolds,
Adv. Math., {\bf 248} (2013), 324--334.

\bibitem{Je}
J. Jezierski,
{Wecken theorem for fixed and periodic points}, Handbook of Topological Fixed Point Theory,
555--615, Springer, Dordrecht, 2005.

\bibitem{JKeM}
J. Jezierski, J. Kedra and W. Marzantowicz,
Homotopy minimal periods for $NR$-solvmanifolds maps,
Topology Appl., {\bf 144} (2004), 29--49.

\bibitem{JKM}
J. Jezierski, E. Keppelmann and W. Marzantowicz,
Wecken property for periodic points on the Klein bottle,
Topol. Methods Nonlinear Anal., {\bf 33} (2009), 51--64.

\bibitem{JM}
J. Jezierski and W. Marzantowicz,
Homotopy minimal periods for nilmanifold maps,
Math. Z., {\bf 239} (2002), 381--414.

\bibitem{JM-pacific}
J. Jezierski and W. Marzantowicz,
Homotopy minimal periods for maps of three-dimensional nilmanifolds,
Pacific J. Math., {\bf 209} (2003), 85--101.

\bibitem{JM-book}
J. Jezierski and W. Marzantowicz,
Homotopy methods in topological fixed and periodic points theory,
Topological Fixed Point Theory and Its Applications, 3, Springer, Dordrecht, 2006.

\bibitem{Jiang}
B. Jiang,
{Lectures on Nielsen fixed point theory},
Contemporary Math., {\bf 14}, Amer. Math. Soc., Providence, R.I., 1983.

\bibitem{JL}
B. Jiang and J. Llibre,
Minimal sets of periods for torus maps,
Discrete Contin. Dynam. Systems, {\bf 4} (1998), 301--320.

\bibitem{KKZ}
J. Y. Kim, S. S. Kim and X. Zhao,
Minimal sets of periods for maps on the Klein bottle,
J. Korean Math. Soc., {\bf 45} (2008), 883--902.

\bibitem{KL-05}
S. W. Kim and J. B. Lee,
Anosov theorem for coincidences on nilmanifolds,
Fund. Math., {\bf 185} (2005), 247--259.

\bibitem{LL-JGP}
J. B. Lee and K. B. Lee,
Lefschetz numbers for continuous maps, and periods for expanding maps on infra-nilmanifolds,
J. Geom. Phys., {\bf 56} (2006), 2011--2023.

\bibitem{LL-Nagoya}
J. B. Lee and K. B. Lee,
Averaging formula for Nielsen numbers of maps on infra-solvmanifolds of type $\R$,
Nagoya Math. J., {\bf 196} (2009), 117--134.

\bibitem{LZ-japan}
J. B. Lee and X. Zhao,
Homotopy minimal periods for expanding maps on infra-nilmanifolds,
J. Math. Soc. Japan, {\bf 59} (2007), 179--184.

\bibitem{LZ-china}
J. B. Lee and X. Zhao,
Nielsen type numbers and homotopy minimal periods for maps on the $3$-nilmanifolds,
Sci. China Ser. A, {\bf 51} (2008), 351--360.

\bibitem{LZ-AGT}
J. B. Lee and X. Zhao,
Nielsen type numbers and homotopy minimal periods for maps on $3$-solvmanifolds,
Algebr. Geom. Topol., {\bf 8} (2008), 563--580.

\bibitem{LZ}
Z. Liang and X. Zhao,
{Self-maps on flat manifolds with infinitely many periods},
Discrete Contin. Dyn. Systems, {\bf 32} (2012), 2223--2232.

\bibitem{Llibre}
J. Llibre,
A note on the set of periods for Klein bottle maps,
Pacific J. Math.,  {\bf 157}  (1993),  87--93.

\bibitem{Passman}
 D. S. Passman,
 The Algebraic Structure of Group Rings,
 Pure and Appl. Math., Wiley, New York, 1977.

\bibitem{Sar}
O. M. \v{S}arkovs'ki\v\i, Co-existence of cycles of a continuous mapping of the line into itself, (Russian)
Ukrain. Mat. \v{Z}., {\bf 16}  (1964), 61–-71.

\bibitem{Tau}
R. Tauraso,
Sets of periods for expanding maps on flat manifolds,
Monatsh. Math., {\bf 128} (1999), 151--157.

\bibitem{Wilking}
B. Wilking,
Rigidity of group actions on solvable Lie groups,
Math. Ann., {\bf 317} (2000), 195--237.

\bibitem{Wolf}
J. Wolf,
  Spaces of Constant Curvature,
  5th ed., Publish or Perish, Wilmington, 1984.

\end{thebibliography}
\end{document}